\documentclass[12pt]{amsart}
\usepackage[margin=1in]{geometry}
\usepackage[english]{babel}
\usepackage[utf8]{inputenc}
\usepackage{subcaption}
\usepackage{amsmath}
\usepackage{amssymb}
\usepackage{amsfonts}
\usepackage{amsthm}
\usepackage{mathrsfs}
\usepackage{float}
\usepackage[all]{xy}
\usepackage[pdftex]{graphicx}
\usepackage{color}
\usepackage{cite}
\usepackage{url}
\usepackage{graphicx}
\usepackage{indent first}
\usepackage[labelfont=bf,labelsep=period,justification=raggedright]{caption}
\usepackage[english]{babel}
\usepackage[utf8]{inputenc}
\usepackage{hyperref}
\usepackage[colorinlistoftodos]{todonotes}
\usepackage{tkz-fct}
\usepackage{tikz}
\usetikzlibrary{calc}
\usepackage{multicol}
\PassOptionsToPackage{dvipsnames,svgnames}{xcolor}
\usepackage{textcomp}
\usepackage{algorithm,algpseudocode}

\DeclareMathOperator{\rad}{rad}

\def\multiset#1#2{\ensuremath{\left(\kern-.3em\left(\genfrac{}{}{0pt}{}{#1}{#2}\right)\kern-.3em\right)}}

\theoremstyle{plain}
\newtheorem{thm}{Theorem}
\newtheorem{lemma}[thm]{Lemma}
\newtheorem{cor}[thm]{Corollary}
\newtheorem{coror}[thm]{Cororollary}

\theoremstyle{definition}
\newtheorem{defn}[thm]{Definition}

\theoremstyle{remark}
\newtheorem{rem}[thm]{Remark}
\newtheorem{ex}{Example}

\numberwithin{equation}{section}
\numberwithin{thm}{section}

\setlength{\intextsep}{10pt}

\begin{document}
\title[Fast Modular Exponentiation]{An Elementary Method For Fast Modular Exponentiation With Factored Modulus}
\author{Anay Aggarwal, Manu Isaacs}
\email{anay.aggarwal.2007@gmail.com, manu.isaacs@gmail.com}
\date{\today}



    








\maketitle
\begin{abstract}
    We present a fast algorithm for modular exponentiation when the factorization of the modulus is known. Let $a,n,m$ be positive integers and suppose $m$ factors canonically as $\prod_{i=1}^k p_i^{e_i}$. Choose integer parameters $t_i\in [1, e_i]$ for $1\le i\le k$. Then we can compute the modular exponentiation $a^n\pmod{m}$ in $O(\max(e_i/t_i)+\sum_{i=1}^k t_i\log p_i)$ steps (i.e., modular operations). We go on to analyze this algorithm mathematically and programmatically, showing significant asymptotic improvement in specific cases. Specifically, for an infinite family of $m$ we achieve a complexity of $O(\sqrt{\log m})$ steps, much faster than the Repeated Squaring Algorithm, which has complexity $O(\log m)$. Additionally, we extend our algorithm to matrices and hence general linear recurrences. The complexity is similar; with the same setup we can exponentiate matrices in $GL_d(\mathbb{Z}/m\mathbb{Z})$ in less than $O(\max(e_i/t_i)+d^2\sum_{i=1}^k t_i\log p_i)$ steps. This improves Fiduccia's algorithm and the results of Bostan and Mori in the case of $\mathbb{Z}/m\mathbb{Z}$. We prove analogous results for $\mathbb{Z}/p^k\mathbb{Z}$ ring extensions.
\end{abstract}
\section{Introduction}
An important problem at the intersection of cryptography and number theory is the Modular Exponentiation Problem. This is the problem of computing $a^n\pmod{m}$ for positive integers $a,n,m$. It happens that solving for $n$ in the equation $a^n\equiv b\pmod{m}$, called the Discrete Logarithm Problem (DLP), is computationally hard. This makes modular exponentiation useful in many cryptosystems, most notably the RSA public key cryptosystem and the Diffie-Hellman key exchange. The standard algorithm to solve the Modular Exponentiation Problem is the Repeated Squaring Algorithm, which runs in $O(\log n)$ steps (in this paper, a step is a modular multiplication). To our knowledge, there are no existing significant asymptotic improvements to this algorithm.
\newline \newline
In this paper, we present an efficient algorithm to solve the Modular Exponentiation Problem when the factorization of the modulus is known. Such a situation is potentially practically useful, as modular exponentiation is generally performed for encryption and thus the user has the freedom of choosing $m$ and knowings its factorization. In the 1984 paper \cite{Bach:CSD-84-186}, Bach proves the existence of a Hensel lifting type algorithm that can lift solutions to the discrete logarithm from modulo $p$ to modulo $p^e$ in polynomial time. One can intuit that in a similar way, there exists a speedup to modular exponentiation when the factorization of the modulus has some prime power in it; this is what our algorithm provides. The algorithm is quite elementary: it hinges on the binomial theorem and the clever recursive computation of inverses and binomial coefficients.  It also depends on a set of parameters: If $m$ can be factored as $\prod_{i=1}^k p_i^{e_i}$, we choose for each $i$ an integer parameter $t_i\in [1,e_i]$. We then compute $a^n\pmod{m}$ in $O(\max(e_i/t_i)+\sum_{i=1}^k t_i\log p_i)$ steps (again, steps are modular multiplications). We additionally provide an $O(1)$ memory algorithm for memory-sensitive scenarios, where we define $O(1)$ memory to mean the memory of an integer modulo $m$, or $\log m$ bits. For general $m$ we make asymptotic improvements as measured by specific metrics. For a certain infinite family of $m$ we achieve a complexity of $O(\sqrt{\log m})$. Furthermore, our algorithm profits from the development of other fast algorithms. For example, if one were to discover a faster modular exponentiation algorithm, we could use this as the $\textsc{modExp}$ function in our algorithm. This, in turn, makes our algorithm faster than the default one.
\newline \newline
In section 2 of this paper, we lay out some preliminary results that our algorithm and analysis require. In section 3, we present the algorithm with pseudocode. In section 4, we mathematically analyze our algorithm. Because its complexity depends on number-theoretic properties of $m$, we require results from analytic number theory to estimate the ``average" complexity. We discuss families of $m$ for which we make significant improvements. Then, in section 5, we present an version of our algorithm for matrices, linear recurrences, and $\mathbb{Z}/p^k\mathbb{Z}$ ring extensions. Finally, in section 6, we test our algorithm for prime powers against Python's built-in $\mathrm{pow}$ function to show our algorithm does practically. A Python implementation of the algorithm and programmatical analysis for it can be found at \cite{isaacs24}.
\section{Preliminaries}
The complexity of our algorithm depends on the number-theoretic properties of the modulus, so we will need multiple preliminary definitions to continue with this analysis. In general, we use the variable $p$ for a prime. We denote the canonical decomposition $m$ as $m = \prod p_i^{e_i}$. By convention, we let $a\pmod{b}$ denote the least residue of $a$ modulo $b$. We let $\nu_p(n)$ denote the $p$-adic valuation of $n$, and we use this notation interchangeably with $\nu(n,p)$. We let $\zeta(\bullet)$ be the Riemann Zeta Function and let $\varphi(\bullet)$ denote Euler's Totient Function. Additionally, we let $\vartheta(\bullet)$ denote Chebyshev's Function. We must define the radical of an integer $n$ because this is intimately related to the optimal complexity that our algorithm may achieve:
\begin{defn}
    If $n=\prod_{i=1}^k p_i^{e_i}$, define the \textit{radical} of $n$ as $\rad n=\prod_{i=1}^k p_i$.
\end{defn}
A useful notation that will assist our analysis is the following:
\begin{defn}
    Let $n=\prod_{i=1}^k p_i^{e_i}$, and define the multiset $S=\{x_1, x_2, \cdots, x_k\}$. We define $H_S(n):=\max\left(\frac{e_i}{x_i}\right)$, and set $H(n):=H_{\{1,1,\ldots, 1\}}(n)$. 
\end{defn}
Our algorithm hinges on the following theorem, due to Euler:
\begin{thm}{(Euler)}
    Let $a,n$ be relatively prime positive integers. Then $a^{\varphi(n)}\equiv 1\pmod{n}$.
\end{thm}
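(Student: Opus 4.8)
The plan is to run the classical ``permutation of reduced residues'' argument, which is elementary and fits the spirit of the paper. First I would fix the set $R = \{r_1, r_2, \ldots, r_{\varphi(n)}\}$ consisting of the integers in $\{1, 2, \ldots, n\}$ that are coprime to $n$; by the very definition of $\varphi$, this set has exactly $\varphi(n)$ elements. The key structural observation is that multiplication by $a$ permutes $R$ modulo $n$: for each $i$, since $\gcd(a,n)=1$ and $\gcd(r_i,n)=1$ we have $\gcd(ar_i,n)=1$, so the least residue of $ar_i$ modulo $n$ again lies in $R$; and the induced map $R \to R$ is injective because $ar_i \equiv ar_j \pmod{n}$ forces $r_i \equiv r_j \pmod{n}$ after cancelling $a$. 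An injective self-map of a finite set is a bijection, so $\{ar_1, \ldots, ar_{\varphi(n)}\}$ is a permutation of $\{r_1, \ldots, r_{\varphi(n)}\}$ modulo $n$.

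Next I would multiply all of these congruences together. Taking the product of the $\varphi(n)$ residues on each side gives
\[
\prod_{i=1}^{\varphi(n)} (a r_i) \equiv \prod_{i=1}^{\varphi(n)} r_i \pmod{n},
\]
and the left-hand side factors as $a^{\varphi(n)} \prod_{i=1}^{\varphi(n)} r_i$. Finally, since each $r_i$ is coprime to $n$, the product $\prod_{i=1}^{\varphi(n)} r_i$ is coprime to $n$ and hence invertible modulo $n$; cancelling it from both sides yields $a^{\varphi(n)} \equiv 1 \pmod{n}$, as desired.

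The only genuinely delicate point — and the place where the hypothesis $\gcd(a,n)=1$ enters — is the cancellation step: one must justify that a unit modulo $n$ may be cancelled, which rests on B\'ezout's identity (if $\gcd(c,n)=1$ then $cx \equiv cy \pmod{n}$ implies $x \equiv y \pmod{n}$). Everything else is bookkeeping. An alternative, even shorter route would be to invoke Lagrange's theorem applied to the group $(\mathbb{Z}/n\mathbb{Z})^\times$, whose order is $\varphi(n)$, so that the order of the element $a$ divides $\varphi(n)$; but since the paper advertises itself as elementary, I would keep the self-contained argument above.
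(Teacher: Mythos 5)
Your proof is correct and complete: it is the standard ``multiplication by $a$ permutes the reduced residue system'' argument, with the injectivity and cancellation steps properly justified via $\gcd(a,n)=1$ and the invertibility of units modulo $n$. The paper states this classical theorem of Euler without proof as a preliminary, so there is no authorial argument to compare against, but your self-contained derivation is exactly the canonical one and contains no gaps.
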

Another result due to Euler which we need to use in our analysis is the Euler Product formula:
\begin{thm}{(Euler)}
    Let $a(n): \mathbb{N}\to\mathbb{C}$ be a multiplicative function, i.e., $a(mn)=a(m)a(n)$ is true when $\gcd(m,n)=1$. Then $$\sum_{n\ge 1}\frac{a(n)}{n^s}=\prod_{p~\mathrm{prime}}\sum_{k=0}^{\infty}\frac{a(p^k)}{p^{ks}}.$$
\end{thm}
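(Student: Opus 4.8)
The plan is to prove this as an identity of absolutely convergent series. First I would restrict attention to those $s$ (say with $\Re(s)$ large enough, or simply whenever the series converges absolutely) for which $\sum_{n\ge 1}|a(n)|\,n^{-\Re(s)}<\infty$; outside that region the statement is to be read formally, and the computation below goes through termwise. Write $\sigma=\Re(s)$. As a preliminary, note that if $a$ is not identically zero then $a(1)=a(1\cdot 1)=a(1)^2$ forces $a(1)=1$, whereas if $a\equiv 0$ both sides vanish; so we may assume $a(1)=1$. Moreover each local series $\sum_{k\ge 0}a(p^k)p^{-ks}$ is a subseries of an absolutely convergent series, hence converges absolutely, so every Euler factor on the right-hand side is well defined.

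The heart of the argument is to truncate the product. Fix a real number $x$ and consider the finite product
$$\prod_{p\le x}\sum_{k=0}^{\infty}\frac{a(p^k)}{p^{ks}}.$$
Being a product of finitely many absolutely convergent series, it may be multiplied out term by term and the resulting sum rearranged freely, so it equals
$$\sum_{(k_p)}\ \prod_{p\le x}\frac{a(p^{k_p})}{p^{k_p s}},$$
where $(k_p)_{p\le x}$ ranges over all tuples of non-negative integers. By unique factorization, the map $(k_p)\mapsto n:=\prod_{p\le x}p^{k_p}$ is a bijection onto the set $A_x$ of positive integers all of whose prime factors are at most $x$; and since the $p^{k_p}$ for distinct $p\le x$ are pairwise coprime, repeated use of multiplicativity gives $\prod_{p\le x}a(p^{k_p})=a(n)$. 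Hence the truncated product equals $\sum_{n\in A_x}a(n)\,n^{-s}$.

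It then remains to let $x\to\infty$. The difference between the full Dirichlet series and the truncated product is
$$\sum_{n\ge 1}\frac{a(n)}{n^s}-\sum_{n\in A_x}\frac{a(n)}{n^s}=\sum_{n\notin A_x}\frac{a(n)}{n^s},$$
and every $n\notin A_x$ has a prime factor exceeding $x$, so in particular $n>x$; thus this tail is bounded in absolute value by $\sum_{n>x}|a(n)|\,n^{-\sigma}$, which tends to $0$ as $x\to\infty$ by absolute convergence. Therefore the truncated products converge to $\sum_{n\ge 1}a(n)\,n^{-s}$, which is the claimed identity.

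The only genuine subtlety — and the step I would be most careful about — is the legitimacy of expanding and rearranging the finite Euler product, which is exactly where absolute convergence of the Dirichlet series is used (both to justify the termwise expansion and to kill the tail); everything else is unique factorization together with multiplicativity. If one prefers to dispense with any convergence hypothesis, the same computation establishes the identity in the ring of formal Dirichlet series, each step being read as an identity of formal power series in the variables $p^{-s}$.
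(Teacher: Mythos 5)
Your proof is correct and is the standard argument for the Euler product: expand the finite product over $p\le x$ using absolute convergence, identify the result with the sum over $x$-smooth integers via unique factorization and multiplicativity, and control the tail $\sum_{n>x}|a(n)|n^{-\sigma}$ as $x\to\infty$. The paper states this result as a classical preliminary without proof, so there is nothing to compare against; your handling of the convergence hypotheses (and the remark that the identity also holds formally) is appropriately careful.
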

In particular, we have the following Euler Product for $\zeta$:
$$\zeta(s)=\prod_{p~\text{prime}}\frac{1}{1-p^{-s}}.$$
To acquire precise asymptotics for specific sums, we need the following asymptotics:
\begin{thm}(Stirling's Approximation)
    $\log(x!)=x\log x-x+O(\log x)$
\end{thm}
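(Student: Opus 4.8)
The plan is to reduce the factorial to a sum and then compare that sum with an integral. Since the statement is applied to integer arguments in what follows, I would take $x$ to be a positive integer and write $\log(x!) = \sum_{k=1}^{x}\log k$. Using that $\log$ is increasing on $(0,\infty)$, each term is sandwiched by $\int_{k-1}^{k}\log t\,dt \le \log k \le \int_{k}^{k+1}\log t\,dt$, and summing over $1\le k\le x$ gives
$$\int_{0}^{x}\log t\,dt \;\le\; \log(x!) \;\le\; \int_{1}^{x+1}\log t\,dt,$$
where the lower integral is improper at $0$ but converges because $t\log t\to 0$ as $t\to 0^{+}$.

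Next I would evaluate both sides with the antiderivative $\int\log t\,dt = t\log t - t$. The lower bound is exactly $x\log x - x$. For the upper bound, $\int_{1}^{x+1}\log t\,dt = (x+1)\log(x+1) - x$; writing $\log(x+1) = \log x + \log(1+1/x)$ and bounding $(x+1)\log(1+1/x) \le (x+1)/x \le 2$, this becomes $x\log x - x + \log x + O(1)$. Combining the two estimates yields $x\log x - x \le \log(x!) \le x\log x - x + \log x + O(1)$, which is precisely the claimed identity $\log(x!) = x\log x - x + O(\log x)$.

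I do not expect any real obstacle here: the argument is entirely elementary, and the only points needing a little care are the convergence of the improper integral at the origin and the crude estimate $(x+1)\log(1+1/x) = O(1)$ used to absorb the boundary contribution. Should a sharper form be needed later, the Euler--Maclaurin summation formula upgrades the error term to $\tfrac12\log x + \log\sqrt{2\pi} + o(1)$, but nothing beyond the monotone integral comparison is required for the stated $O(\log x)$ bound.
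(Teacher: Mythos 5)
Your proof is correct: the monotone integral comparison $\int_0^x \log t\,dt \le \sum_{k=1}^x \log k \le \int_1^{x+1}\log t\,dt$ is valid (including the convergent improper integral at the origin), the evaluation of both sides is right, and the estimate $(x+1)\log(1+1/x)\le 2$ cleanly absorbs the boundary term. The paper states this result as a standard preliminary without proof, so there is nothing to compare against; your argument is the canonical elementary one and fully suffices for the $O(\log x)$ error term used later (the paper only ever applies it to the integer sum $\sum_{m\le x}\log m$, so restricting to integer $x$ is harmless).
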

\begin{thm}(Chebyshev)
    $\vartheta(x)=O(x)$
\end{thm}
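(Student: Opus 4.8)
The plan is to give the classical elementary argument via central binomial coefficients, which uses only the binomial theorem (already a workhorse of this paper) together with one divisibility observation. Write $\vartheta(x)=\sum_{p\le x}\log p$. First I would record the bound $\binom{2n}{n}<4^n$: since $\binom{2n}{n}$ is one of the $2n+1$ positive terms in the expansion $(1+1)^{2n}=\sum_{k=0}^{2n}\binom{2n}{k}$, it is at most $2^{2n}=4^n$. Likewise $\binom{2n+1}{n}=\binom{2n+1}{n+1}$ are two equal terms of $(1+1)^{2n+1}$, so $\binom{2n+1}{n}\le 2^{2n}=4^n$.

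Next comes the key divisibility fact: every prime $p$ with $n<p\le 2n$ divides $\binom{2n}{n}=(2n)!/(n!)^2$. Such a $p$ contributes exactly one factor to $(2n)!$ (as $2p>2n$) and none to $n!$ (as $p>n$), so $\nu_p\!\left(\binom{2n}{n}\right)=1$. Since these primes are distinct, $\prod_{n<p\le 2n}p$ divides $\binom{2n}{n}$, whence $\prod_{n<p\le 2n}p\le\binom{2n}{n}<4^n$. Taking logarithms gives $\vartheta(2n)-\vartheta(n)<2n\log 2$, and the analogous computation with $\binom{2n+1}{n}$ gives $\vartheta(2n+1)-\vartheta(n+1)<2n\log 2$.

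Finally I would prove $\vartheta(m)<(2\log 2)m$ for every integer $m\ge 1$ by strong induction, checking small $m$ directly. For $m=2n$, the hypothesis applied to $n$ together with $\vartheta(2n)-\vartheta(n)<2n\log 2$ yields $\vartheta(2n)<2n\log 2+(2\log 2)n=(2\log 2)(2n)$. For $m=2n+1$, applying the hypothesis to $n+1$ and using $\vartheta(2n+1)-\vartheta(n+1)<2n\log 2$ gives $\vartheta(2n+1)<2n\log 2+(2\log 2)(n+1)=(2\log 2)(2n+1)$. Since $\vartheta(x)=\vartheta(\lfloor x\rfloor)<(2\log 2)\lfloor x\rfloor\le(2\log 2)x$, we conclude $\vartheta(x)=O(x)$ with explicit constant $2\log 2$.

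No step here is deep, but the one needing the most care is the $p$-adic valuation computation for primes in the dyadic window $(n,2n]$ in the second step, and — relatedly — arranging the induction so that the constant closes under both the even and odd cases at once, which is exactly why the $\binom{2n+1}{n}$ variant is introduced. The rest is routine bookkeeping.
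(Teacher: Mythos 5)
Your proof is correct and complete. Note that the paper does not actually prove this statement --- it is quoted as a classical preliminary (Chebyshev's bound) and used as a black box in Theorem 4.2 --- so there is no in-paper argument to compare against. What you have written is the standard elementary Erd\H{o}s-style proof: the bound $\binom{2n}{n}<4^n$ and $\binom{2n+1}{n}\le 4^n$ from the binomial theorem, the observation that $\nu_p\bigl(\binom{2n}{n}\bigr)=1$ for $n<p\le 2n$ (and likewise for primes in $(n+1,2n+1]$ dividing $\binom{2n+1}{n}$), and a strong induction that closes with the explicit constant $2\log 2$ in both parity cases. Each step checks out, including the valuation computation ($\lfloor 2n/p\rfloor=1$ and $p^2>2n$ for $p>n$) and the arithmetic $2n\log 2+(2\log 2)(n+1)=(2\log 2)(2n+1)$ in the odd case. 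This is a perfectly good self-contained justification of the cited result, and the constant $2\log 2$ is stronger than what the paper needs.
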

To approximate average order summations, we will need Abel's summation formula, referred to as partial summation:
\begin{thm}(Abel)
    Let $(a_n)^{\infty}_{n=0}$ be a sequence of complex numbers. Define $A(t)=\sum_{0\le n\le t}a_n$. Fix real numbers $x<y$ and let $\phi$ be a continuously differentiable function on $[x,y]$. Then $$\sum_{x<n\le y}a_n\phi(n)=A(y)\phi(y)-A(x)\phi(x)-\int_x^y A(u)\phi'(u)\mathrm{d}u.$$
\end{thm}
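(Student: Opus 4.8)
The plan is to exploit the single structural fact underlying the identity: because $(a_n)$ is indexed by integers, the partial-sum function $A(t)=\sum_{0\le n\le t}a_n$ is a step function, constant and equal to $A(n)$ on each half-open interval $[n,n+1)$ with $n\in\ZZ$. This is exactly what makes $\int_x^y A(u)\phi'(u)\,\mathrm{d}u$ computable in closed form, and the rest is a discrete summation by parts plus endpoint bookkeeping.

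Concretely, I would set $p=\lfloor x\rfloor$ and $q=\lfloor y\rfloor$, so that (in the generic case where $x$ is not an integer) the integers $n$ with $x<n\le y$ are precisely $p+1,p+2,\dots,q$. Splitting the integral at the intervening integers and using that $\phi\in C^1[x,y]$ to apply the fundamental theorem of calculus on each piece gives
\[
\int_x^y A(u)\phi'(u)\,\mathrm{d}u = A(p)\bigl(\phi(p+1)-\phi(x)\bigr)+\sum_{n=p+1}^{q-1}A(n)\bigl(\phi(n+1)-\phi(n)\bigr)+A(q)\bigl(\phi(y)-\phi(q)\bigr).
\]
Then I would reindex $\sum_n A(n)\phi(n+1)$ as $\sum_n A(n-1)\phi(n)$, absorb the stray boundary terms $A(p)\phi(p+1)$ and $A(q)\phi(q)$ so that both resulting sums run over $n=p+1,\dots,q$, and collect; the telescoping leaves
\[
\int_x^y A(u)\phi'(u)\,\mathrm{d}u = A(q)\phi(y)-A(p)\phi(x)-\sum_{n=p+1}^{q}\bigl(A(n)-A(n-1)\bigr)\phi(n).
\]
Since $A(n)-A(n-1)=a_n$ for $n\in\ZZ$, and since $A(x)=A(p)$, $A(y)=A(q)$, and the sum over $p+1\le n\le q$ is exactly the sum over $x<n\le y$, this rearranges to the asserted formula.

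I do not expect a genuine obstacle — the statement is classical — but the step that needs care is the endpoint handling: whether $x$ or $y$ happens to be an integer changes both the summation range $p+1,\dots,q$ and the identifications $A(x)=A(\lfloor x\rfloor)$, $A(y)=A(\lfloor y\rfloor)$, and one should also check the degenerate case $\lfloor x\rfloor=\lfloor y\rfloor$, where the sum is empty, $A$ is constant on $[x,y]$, and both sides vanish. I would fix a uniform convention for the half-open sums once and for all and verify that the telescoping identity above survives each boundary case. An equivalent, slightly slicker packaging is to write $\sum_{x<n\le y}a_n\phi(n)=\int_x^y \phi\,\mathrm{d}A$ as a Riemann–Stieltjes integral and invoke Riemann–Stieltjes integration by parts, but the step-function computation is more elementary and matches the tone of the paper.
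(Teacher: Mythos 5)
Your proof is correct: the paper states Abel's summation formula as a classical preliminary and gives no proof of its own, so there is nothing to diverge from. Your step-function decomposition of $\int_x^y A(u)\phi'(u)\,\mathrm{d}u$ at the integers $\lfloor x\rfloor+1,\dots,\lfloor y\rfloor$, followed by reindexing and telescoping via $A(n)-A(n-1)=a_n$, is the standard argument, and your attention to the endpoint identifications $A(x)=A(\lfloor x\rfloor)$, $A(y)=A(\lfloor y\rfloor)$ and the empty-sum case is exactly the bookkeeping that needs to be checked.
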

We compare our algorithm to the standard repeated squaring method for modular exponentiation (there is no general asymptotic improvement to this method, as far as we know). We may compute $a^n\pmod{m}$ in $O(\log n)$ steps by this method. By Euler's theorem, we can reduce this to $O(\log(\varphi(m)))$ which is considered $O(\log m)$.
\newline \newline
For a ring $R$, we let $GL_n(R)$ denote the general linear group over $R$. We will need the well-known fact that $|GL_n(\mathbb{F}_p)|=\prod_{i=0}^{n-1} (p^n-p^i)$. We will also need the following theorem due to Lagrange:
\begin{thm}\label{lagrange} (Lagrange)
    Let $G$ be a group of order $n$. Then for every $a\in G$, $a^n$ is the identity.
\end{thm}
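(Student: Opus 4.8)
The plan is to deduce the stated result from the classical subgroup form of Lagrange's theorem, namely that the order of any subgroup of a finite group divides the order of the group. First I would fix $a \in G$ and consider the cyclic subgroup $H = \langle a \rangle$ it generates. Since $G$ is finite, the powers $a, a^2, a^3, \dots$ cannot all be distinct, so there exist $i < j$ with $a^i = a^j$; multiplying by $a^{-i}$ gives $a^{j-i} = e$, so the \emph{order} $d$ of $a$ --- the least positive integer with $a^d = e$ --- is well defined and finite. A short argument by division with remainder then shows $H = \{e, a, a^2, \dots, a^{d-1}\}$ and that these $d$ elements are pairwise distinct, so $|H| = d$.

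Next I would prove the subgroup form of Lagrange's theorem. Define a relation on $G$ by $x \sim y$ if and only if $x^{-1} y \in H$; using that $H$ is a subgroup (closed under products and inverses, containing $e$) this is an equivalence relation, and its classes are exactly the left cosets $gH = \{gh : h \in H\}$. Hence the cosets partition $G$. For each $g$, the map $H \to gH$, $h \mapsto gh$, is a bijection with inverse $x \mapsto g^{-1}x$, so every coset has exactly $|H|$ elements. Writing $[G:H]$ for the number of cosets and summing cardinalities over the partition gives $n = |G| = [G:H]\cdot |H| = [G:H]\cdot d$, so in particular $d \mid n$.

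Finally, write $n = dq$ with $q = [G:H]$ a positive integer. Then
$$a^n = a^{dq} = \left(a^d\right)^q = e^q = e,$$
which is the claim.

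I do not expect a genuine obstacle here; the only points demanding care are the pigeonhole step guaranteeing that $d$ is finite and the verification that $\sim$ is an equivalence relation whose classes are precisely the left cosets of $H$, both of which are routine once the group axioms are invoked explicitly. (For abelian $G$ one could instead multiply together all elements of $G$ and compare with the reindexed product obtained by translating by $a$, avoiding cosets entirely, but the coset argument above has the advantage of covering the general case.)
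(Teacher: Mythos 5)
Your proof is correct. The paper states this result as a classical preliminary and gives no proof of its own, so there is nothing to compare against; your argument is the standard one, deducing the element-order statement from the subgroup form of Lagrange's theorem via the cyclic subgroup $\langle a\rangle$, and all the steps you flag as needing care (finiteness of the order $d$ by pigeonhole, the coset partition, the bijection $h\mapsto gh$, and the final computation $a^n=(a^d)^{[G:H]}=e$) are handled correctly.
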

To compute the order of general linear groups over $\mathbb{Z}/T\mathbb{Z}$, we must first compute the order of $GL_n(\mathbb{Z}/p^k\mathbb{Z})$. We do this with the following lemma:
\begin{lemma}\label{glpk}
    For a prime $p$ and a $k\ge 0$ we have $$|GL_n(\mathbb{Z}/p^k\mathbb{Z})|=p^{(k-1)n^2}|GL_n(\mathbb{F}_p)|=p^{(k-1)n^2}\prod_{i=0}^{n-1}(p^n-p^i).$$
\end{lemma}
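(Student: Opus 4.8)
The plan is to exploit the canonical ring surjection $\mathbb{Z}/p^k\mathbb{Z} \twoheadrightarrow \mathbb{Z}/p\mathbb{Z} = \mathbb{F}_p$, which induces by entrywise reduction a group homomorphism $\pi \colon GL_n(\mathbb{Z}/p^k\mathbb{Z}) \to GL_n(\mathbb{F}_p)$, and then to read off $|GL_n(\mathbb{Z}/p^k\mathbb{Z})| = |\ker\pi|\cdot|\im\pi|$ from the first isomorphism theorem. (The case $k=1$ is the classical count $|GL_n(\mathbb{F}_p)| = \prod_{i=0}^{n-1}(p^n-p^i)$, already recalled in the excerpt, and for $k=0$ the group is trivial; so we may assume $k\ge 1$.)

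First I would show $\pi$ is surjective. The key point is the unit/determinant criterion for invertibility: a square matrix $A$ over a commutative ring is invertible iff $\det A$ is a unit, and over $\mathbb{Z}/p^k\mathbb{Z}$ the units are exactly the residues coprime to $p$. Since $\det$ commutes with reduction, $A \in GL_n(\mathbb{Z}/p^k\mathbb{Z})$ iff $\det A \not\equiv 0 \pmod p$ iff $\pi(A) \in GL_n(\mathbb{F}_p)$. Hence \emph{every} entrywise lift of a matrix in $GL_n(\mathbb{F}_p)$ already lands in $GL_n(\mathbb{Z}/p^k\mathbb{Z})$, so $\pi$ is onto and $|\im\pi| = |GL_n(\mathbb{F}_p)| = \prod_{i=0}^{n-1}(p^n-p^i)$.

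Next I would compute $|\ker\pi|$. A matrix lies in $\ker\pi$ iff it is congruent to the identity modulo $p$, i.e.\ has the form $I + pB$; here $pB$ ranges over $pM_n(\mathbb{Z}/p^k\mathbb{Z})$, and multiplication by $p$ gives a bijection $M_n(\mathbb{Z}/p^{k-1}\mathbb{Z}) \xrightarrow{\sim} pM_n(\mathbb{Z}/p^k\mathbb{Z})$. Conversely each such $I+pB$ is invertible since $\det(I+pB)\equiv 1 \pmod p$, hence lies in $\ker\pi$. Counting the $n^2$ free entries of $B \in M_n(\mathbb{Z}/p^{k-1}\mathbb{Z})$ gives $|\ker\pi| = (p^{k-1})^{n^2} = p^{(k-1)n^2}$. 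Multiplying the two factors yields $|GL_n(\mathbb{Z}/p^k\mathbb{Z})| = p^{(k-1)n^2}\prod_{i=0}^{n-1}(p^n-p^i)$, as claimed.

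There is no real obstacle; the only steps needing a touch of care are the determinant characterization of invertibility over $\mathbb{Z}/p^k\mathbb{Z}$ (which is what makes surjectivity of $\pi$ automatic, with no Hensel lifting required) and the clean identification $\ker\pi \cong M_n(\mathbb{Z}/p^{k-1}\mathbb{Z})$ via $B\mapsto I+pB$. An equivalent but slightly longer alternative is induction on $k$ via the reduction map $GL_n(\mathbb{Z}/p^k\mathbb{Z}) \to GL_n(\mathbb{Z}/p^{k-1}\mathbb{Z})$, whose kernel $I + p^{k-1}M_n(\mathbb{F}_p)$ has size $p^{n^2}$; this telescopes to the same formula.
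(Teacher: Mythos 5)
Your proof is correct and is essentially the paper's argument: the paper inducts on $k$ using the one-step reduction $GL_n(\mathbb{Z}/p^k\mathbb{Z})\to GL_n(\mathbb{Z}/p^{k-1}\mathbb{Z})$, whose kernel has size $p^{n^2}$, and this telescopes to exactly the single reduction-mod-$p$ count you carry out directly (an alternative you yourself note at the end). If anything you are slightly more careful than the paper, which asserts the surjectivity of the reduction map and the kernel count without the determinant-is-a-unit justification you supply.
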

\begin{proof}
    We proceed by induction on $k$, with $k=1$ trivial. Notice the natural ring homomorphism $\varphi:\mathbb{Z}/p^k\mathbb{Z}\to\mathbb{Z}/p^{k-1}\mathbb{Z}$ given by $a+p^k\mathbb{Z}\mapsto a+p^{k-1}\mathbb{Z}$. This induces the surjection $\varphi_n:GL_n(\mathbb{Z}/p^k\mathbb{Z})\to GL_n(\mathbb{Z}/p^{k-1}\mathbb{Z})$. Hence $|GL_n(\mathbb{Z}/p^k\mathbb{Z})|=|\ker(\varphi_n)||GL_n(\mathbb{Z}/p^{k-1}\mathbb{Z})|$. Since there are $p$ choices for each entry of a matrix in the kernel of $\varphi_n$, the size of the kernel is $p^{n^2}$, and we have the desired result.
\end{proof}
This allows us to prove the following important lemma:
\begin{lemma}\label{glt}
    If $T=p_1^{t_1}p_2^{t_2}\cdots p_k^{t_k}$ is the canonical factorization of some $T\in \mathbb{N}$, 
    $$|GL_n(\mathbb{Z}/T\mathbb{Z})|=\prod_{j=1}^k p_j^{(t_j-1)n^2}\prod_{i=0}^{n-1}(p_j^n-p_j^i).$$
\end{lemma}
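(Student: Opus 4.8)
The plan is to reduce the computation to the prime-power case already handled by Lemma \ref{glpk}, using the Chinese Remainder Theorem. Since the prime powers $p_1^{t_1},\dots,p_k^{t_k}$ are pairwise coprime, CRT gives a ring isomorphism $\mathbb{Z}/T\mathbb{Z}\cong\prod_{j=1}^k\mathbb{Z}/p_j^{t_j}\mathbb{Z}$. Applying the functor $M_n(-)$, which commutes with finite products of rings, yields a ring isomorphism $M_n(\mathbb{Z}/T\mathbb{Z})\cong\prod_{j=1}^k M_n(\mathbb{Z}/p_j^{t_j}\mathbb{Z})$; passing to unit groups then gives a group isomorphism $GL_n(\mathbb{Z}/T\mathbb{Z})\cong\prod_{j=1}^k GL_n(\mathbb{Z}/p_j^{t_j}\mathbb{Z})$.

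First I would justify that $M_n$ commutes with products: a matrix over $R\times S$ is literally a pair consisting of a matrix over $R$ and a matrix over $S$, and addition and multiplication of such matrices are performed componentwise, so this is an isomorphism of rings. Next, any ring isomorphism carries the unit group of the source bijectively onto the unit group of the target, and the units of a finite product of rings are exactly the tuples of units in the factors; hence the unit group of $M_n(\mathbb{Z}/T\mathbb{Z})$, which is by definition $GL_n(\mathbb{Z}/T\mathbb{Z})$, is isomorphic to $\prod_{j=1}^k GL_n(\mathbb{Z}/p_j^{t_j}\mathbb{Z})$.

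Finally, taking cardinalities and invoking Lemma \ref{glpk} for each prime power $p_j^{t_j}$ gives
$$|GL_n(\mathbb{Z}/T\mathbb{Z})|=\prod_{j=1}^k|GL_n(\mathbb{Z}/p_j^{t_j}\mathbb{Z})|=\prod_{j=1}^k p_j^{(t_j-1)n^2}\prod_{i=0}^{n-1}(p_j^n-p_j^i),$$
which is the claimed formula. There is no genuinely hard step; the only point that deserves a sentence of care is the identification of $GL_n$ of a product ring with the product of the $GL_n$'s, which follows formally from CRT together with the elementary description of units in a product ring. An equally short alternative avoids unit groups altogether: a matrix $A\in M_n(\mathbb{Z}/T\mathbb{Z})$ is invertible if and only if $\det A\in(\mathbb{Z}/T\mathbb{Z})^\times$, which by CRT holds if and only if the reduction of $\det A$ modulo each $p_j^{t_j}$ is a unit, so the count of invertible matrices factors as a product over $j$ and one again applies Lemma \ref{glpk}.
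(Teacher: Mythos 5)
Your proposal is correct and follows the same route as the paper: apply the Chinese Remainder Theorem to get $\mathbb{Z}/T\mathbb{Z}\cong\prod_{j}\mathbb{Z}/p_j^{t_j}\mathbb{Z}$, deduce the corresponding isomorphism of $GL_n$'s, and invoke Lemma \ref{glpk} for each prime-power factor. The extra care you take in justifying that $GL_n$ of a product ring is the product of the $GL_n$'s is a welcome (if routine) elaboration of a step the paper states without proof.
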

\begin{proof}
    By the Chinese Remainder Theorem, we have the ring isomorphism
    $$\mathbb{Z}/T\mathbb{Z}\to \prod_{j=1}^k \mathbb{Z}/p_j^{t_j}\mathbb{Z},$$
    and thus there is a corresponding isomorphism
    $$GL_n(\mathbb{Z}/T\mathbb{Z})\to \prod_{j=1}^k GL_n(\mathbb{Z}/p_j^{t_j}\mathbb{Z}).$$
    This implies the desired result by Lemma \ref{glpk}.
\end{proof}
\begin{rem}
    It follows from Lemma \ref{glt} that we may compute $|GL_n(\mathbb{Z}/T\mathbb{Z})|$ in $O(kn)$ steps, which is insignificant.
\end{rem}
We use standard convention (from \cite{knuth97}) for the asymptotic notations $O(\bullet), o(\bullet), \ll, \gg, \sim$, and $\Omega(\bullet)$. We use $<O(\bullet)$ and $>O(\bullet)$ rather unconventionally: $f(x)<O(g(x))$ provided there is a function $h(x)=O(g(x))$ such that $f(x)<h(x)$ for sufficiently large $x$. We use a similar definition for $>O(\bullet)$. 
\section{The Algorithm}
Our main theorem is the following:
\begin{thm} \label{main}
     Let $m$ be a positive integer with known factorization $p_1^{e_1}p_2^{e_2}\cdots p_k^{e_k}$ and let $a,n$ be positive integers such that $\gcd(a,m)=1$. For each $1\le i\le k$, choose an integer parameter $t_i\in [1,e_i]$. With $\mathcal{T}=\{t_1, t_2, \cdots, t_k\}$, we may compute $a^n\pmod{m}$ in
        $$O\left(H_{\mathcal{T}}(m)+\sum_{i=1}^k t_i\log p_i\right)$$
        steps.
\end{thm}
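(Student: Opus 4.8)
The plan is to give an algorithm together with its cost count, running the whole computation directly modulo $m$ rather than prime-by-prime; it is working modulo $m$ that produces the $\max$ in $H_{\mathcal{T}}(m)$, since a summand is killed modulo $m$ as soon as it is killed modulo the worst prime power.

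First I would normalize the exponent: since $\gcd(a,m)=1$, Euler's theorem gives $a^n\equiv a^{\,n\bmod\varphi(m)}\pmod m$, so after an $O(k)$ computation of $\varphi(m)$ I may assume $0\le n<\varphi(m)<m$. Put $T=\prod_{i=1}^k p_i^{t_i}$, so $\varphi(T)=\prod_i p_i^{t_i-1}(p_i-1)\le T\le m$ and $\rad T\mid\rad m$ forces $\gcd(a,T)=1$. Write $n=\varphi(T)\,s+r$ with $0\le r<\varphi(T)$ and $s=\lfloor n/\varphi(T)\rfloor<\prod_i p_i^{e_i-t_i}\le m$. Euler's theorem applied to $T$ gives $T\mid a^{\varphi(T)}-1$, so with $u:=a^{\varphi(T)}\bmod m$ the quantity $w:=(u-1)/T$ is a nonnegative integer below $m/T$, and
$$a^n\equiv a^{\,r}\,(1+Tw)^{s}\pmod m.$$
Computing $u$ and $a^{\,r}$ by repeated squaring costs $O(\log\varphi(T))=O\!\big(\sum_i t_i\log p_i\big)$ modular multiplications apiece, since $r,\varphi(T)<T\le m$, and recovering $w$ is a single division.

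What remains is to evaluate $(1+Tw)^{s}\bmod m$, where the binomial theorem does the work: $(1+Tw)^s=\sum_{j\ge 0}\binom{s}{j}T^{j}w^{j}$. Since $\nu_{p_i}\!\big(\binom{s}{j}T^{j}w^{j}\big)\ge t_i j$ for every $i$, the $j$-th term is divisible by $m$ as soon as $t_i j\ge e_i$ for all $i$, i.e.\ once $j\ge\max_i\lceil e_i/t_i\rceil$; because $\lceil e_i/t_i\rceil-1<e_i/t_i\le H_{\mathcal{T}}(m)$, only $O(H_{\mathcal{T}}(m))$ terms survive. I would accumulate them by a running product: taking $U_0=1$ and $U_j=U_{j-1}\cdot(s-j+1)\cdot T\cdot w/j$ gives $U_j=\binom{s}{j}T^{j}w^{j}$, so the answer is $a^{\,r}\sum_{j<\max_i\lceil e_i/t_i\rceil}U_j\bmod m$, each update costing $O(1)$ modular multiplications apart from the division by $j$, and the final multiplication by $a^{\,r}$ costing $O(1)$. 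Summing, the two repeated-squaring phases contribute $O(\sum_i t_i\log p_i)$ and the binomial sum contributes $O(H_{\mathcal{T}}(m))$, which is the claimed bound.

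The delicate point — and the step I expect to be the real obstacle — is the division by $j$ in this recursion, because $j$ need not be invertible modulo $m$. I would handle it by writing $j=j'j''$ with $\gcd(j',m)=1$ and every prime factor of $j''$ dividing $m$: division by $j'$ is multiplication by $j'^{-1}\bmod m$, while division by $j''$ is legitimate because $U_j$ is an integer and the factors $T^{j}$ contribute at least $t_i j\ge\nu_{p_i}(j!)$ powers of each $p_i$, so it suffices to carry the running product at a slightly enlarged $p_i$-adic precision, or equivalently to track $\nu_{p_i}(U_j)$ together with the unit part of $U_j$ separately for each $i$. The only genuinely exceptional situation, $p_i^{e_i}\mid s-j+1$ for some $j$, occurs for at most one $j$ per prime, is detected in $O(1)$ by reducing $s-j+1$ modulo $p_i^{e_i}$, and merely makes that term vanish. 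Since modular inverses, $p$-adic valuations, divisions and reductions are not counted as steps in the theorem's cost model, none of this bookkeeping affects the multiplication count, and the stated bound follows.
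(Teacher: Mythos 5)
Your proposal is essentially the paper's own proof: the same reduction $n=\varphi(T)s+r$ with $T=\prod p_i^{t_i}$, the same use of Euler's theorem to write $a^{\varphi(T)}=1+Tw$, the same truncated binomial expansion with $O(H_{\mathcal{T}}(m))$ surviving terms, and the same resolution of the division-by-$j$ problem by splitting $j$ into a unit part modulo $m$ and a part supported on the primes of $m$ (this is exactly the paper's notion of the ``inverse pair'' $\{u_j,v_j\}$). The one place where your accounting is looser than the paper's is the assertion that the modular inverses $j'^{-1}\bmod m$ for $j\le\ell$ are free: the paper does count them, and to fit them into the $O(H_{\mathcal{T}}(m))$ budget it uses the recursion $j^{-1}\equiv-(m\bmod j)^{-1}\lfloor m/j\rfloor\pmod m$ to batch-compute all $\ell$ inverses in $O(\ell)$ multiplications (Lemmas \ref{recursioninverse} and \ref{recursioninversepairs}); computing each inverse separately by the extended Euclidean algorithm only yields the weaker $O(H_{\mathcal{T}}(m)\log H_{\mathcal{T}}(m))$ bound, which the paper states as a separate memory-optimized theorem. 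You should therefore either invoke such a linear-time batch inversion or accept the extra logarithmic factor; with that repair your argument delivers the stated bound.
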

First, we need a definition that will simplify our calculation of modular inverses.
\begin{defn}\label{inversepair} For some $a$ and $m= \prod p_i^{e_i}$, define $\{u_a, v_a\}$ to be the \textit{inverse pair} of $a$ modulo $m$, where $v_a = \prod p_i^{\nu(a, p_i)}$ and $u_a \equiv (a/v_a)^{-1} \pmod{m}$. Define the inverse pair of $0$ as $\{0,0\}$.
\end{defn}
\begin{ex}
    Let us compute the inverse pair of $12$ modulo $20$ as an example. We have $v_{12} = 2^{\nu(12, 2)} \cdot 5^{\nu (12, 5)} = 4$. Then, $u_{12} \equiv (12/4)^{-1}\equiv 7 \pmod{20}$. So the inverse pair of $12$ modulo $20$ is $\{4, 7\}$. Observe also that $4 \cdot 7^{-1} \equiv 12 \pmod{20}$.
\end{ex}
Notice that $u_a$ always exists because $a/v_a$ is clearly invertible modulo $m$. Intuitively, inverse pairs are inverses equipped with an extra parameter that allows for computation of fractions with denominator not necessarily relatively prime to $m$.
\newline \newline
Consider the following lemma that allows for recursive computation of modular inverses.
\begin{lemma} \label{recursioninverse} If all of $1,2,\dots,a$ are invertible modulo $m$, $$a^{-1}\equiv -(m\pmod{a})^{-1}\Big\lfloor\frac{m}{a}\Big\rfloor\pmod{m}.$$ \end{lemma}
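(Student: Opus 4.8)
The plan is to start from the division algorithm. Writing $q = \lfloor m/a \rfloor$ and $r = m \bmod a$, we have the exact identity $m = qa + r$ with $0 \le r < a$. Rearranging gives $r = m - qa$, and reducing this modulo $m$ yields the congruence $r \equiv -qa \pmod{m}$. This is the only real content of the lemma; everything else is bookkeeping about invertibility.

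Next I would check that $r$ is invertible modulo $m$ so that $r^{-1}$ on the right-hand side makes sense. Since $a$ is invertible modulo $m$ we have $\gcd(a,m)=1$; if $a = 1$ the statement is vacuous (or trivial), so assume $a \ge 2$, whence $a \nmid m$ and therefore $r \neq 0$, i.e. $r \in \{1, 2, \dots, a-1\}$. By hypothesis every element of $\{1,\dots,a\}$ is invertible modulo $m$, so in particular $r^{-1}$ exists modulo $m$.

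Finally I would multiply the congruence $r \equiv -qa \pmod{m}$ through by $r^{-1} a^{-1}$ (both of which exist modulo $m$), obtaining $a^{-1} \equiv -q\, r^{-1} \pmod{m}$, which is exactly the claimed formula $a^{-1}\equiv -(m\bmod a)^{-1}\lfloor m/a\rfloor \pmod{m}$.

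I do not expect any genuine obstacle here: the proof is a one-line manipulation of the division algorithm together with the observation that $m \bmod a$ lands in the range $\{1,\dots,a-1\}$ and is therefore covered by the invertibility hypothesis. The only point worth stating explicitly is why $m \bmod a \ne 0$, which is where the assumption $\gcd(a,m)=1$ (equivalently, invertibility of $a$) gets used.
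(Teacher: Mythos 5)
Your proof is correct and follows essentially the same route as the paper's: both start from the division-algorithm identity $m = a\lfloor m/a\rfloor + (m \bmod a)$, reduce modulo $m$ to get $(m \bmod a) \equiv -a\lfloor m/a\rfloor$, and then divide through. Your version is slightly more careful in that it explicitly verifies $m \bmod a \in \{1,\dots,a-1\}$ (so that the hypothesis guarantees its invertibility and the case $a=1$ is handled), a point the paper's proof leaves implicit.
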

\begin{proof}
Note $m=a\lfloor \frac{m}{a}\rfloor+(m\pmod a)$. Working in $\mathbb{Z}/m\mathbb{Z}$, $$(m\pmod a)^{-1}\cdot \left(m-\Big\lfloor \frac{m}{a}\Big\rfloor\right)=\frac{m-\lfloor\frac{m}{a}\rfloor}{m-a\lfloor \frac{m}{a}\rfloor}=\frac{-\lfloor \frac{m}{a}\rfloor}{-a\lfloor \frac{m}{a}\rfloor}=\frac{1}{a},$$ as desired. \end{proof}
We now extend this result from a recursive computation of modular inverses to a recursive computation of inverse pairs.
\begin{lemma} \label{recursioninversepairs}
We may linearly compute $\{u_n, v_n\}$ given the inverse pairs $\bigcup_{1\le i < n} \{u_i, v_i\}$.
\end{lemma}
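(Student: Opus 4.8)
The plan is a case analysis on how $n$ meets the prime support of $m$, reducing $\{u_n,v_n\}$ in each case to one or two of the inverse pairs already in hand, at the cost of only $O(1)$ modular multiplications (plus cheap arithmetic on integers of size $O(n)$); that is the sense in which the computation is ``linear'' — each new pair costs $O(1)$ given its predecessors, so the whole table $\{u_1,v_1\},\dots,\{u_N,v_N\}$ costs $O(N)$.

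\textbf{Case $\gcd(n,m)>1$.} Pick a prime $p_j$ dividing $\gcd(n,m)$; this is found by testing the $k$ primes $p_1,\dots,p_k$, or simply carried along from the sieve producing the values of $n$. Since $\nu(n,p_i)=\nu(n/p_j,p_i)$ for $i\ne j$ while $\nu(n,p_j)=\nu(n/p_j,p_j)+1$, Definition~\ref{inversepair} gives $v_n=p_j\,v_{n/p_j}$; and because $n/v_n=(n/p_j)/v_{n/p_j}$, also $u_n\equiv u_{n/p_j}\pmod m$. Hence $\{u_n,v_n\}=\{u_{n/p_j},\,p_j v_{n/p_j}\}$, read off from the single earlier pair indexed by $n/p_j<n$.

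\textbf{Case $\gcd(n,m)=1$.} Now $v_n=1$ and we must produce $u_n\equiv n^{-1}\pmod m$. If $n$ is composite, write $n=ab$ with $1<a,b<n$; then $a$ and $b$ are units, so $u_n\equiv u_a u_b\pmod m$. If $n$ is prime, we imitate Lemma~\ref{recursioninverse}: put $q=\lfloor m/n\rfloor$ and $r=m\bmod n<n$, so $nq\equiv -r\pmod m$ and therefore $n\,(q\,u_r)\equiv -r\,u_r\equiv -v_r\pmod m$, using $r\equiv v_r u_r^{-1}$. If the \emph{given} pair $\{u_r,v_r\}$ has $v_r=1$ (equivalently $r$ is a unit, a condition we can simply read off that pair), this collapses to $u_n\equiv -q\,u_r\pmod m$, which is exactly Lemma~\ref{recursioninverse} but now valid without assuming $1,\dots,n$ are all units.

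\textbf{The obstacle.} The delicate point is a prime $n$ coprime to $m$ for which $r=m\bmod n$ is \emph{not} a unit ($v_r>1$): the congruence $n(q u_r)\equiv -v_r\pmod m$ in general pins $n$ down only modulo a proper divisor of $m$, so the bare Euclidean step cannot on its own output $n^{-1}\bmod m$. The repair I have in mind is to observe that $v_r\mid q$ — this follows from $\gcd(nu_r,m)=1$ together with $v_r\mid m$, which holds when $r<\min_i p_i^{e_i}$, larger $r$ needing an extra step — and then to write $q=v_r q_1$, obtaining $n q_1 u_r\equiv -1\pmod{m/v_r}$, and finally to recover the missing part of $n^{-1}$ by a Hensel lift of the $\bmod\ p$ inverses provided by Lemma~\ref{recursioninverse} (whose hypothesis is automatic after reducing $n$ modulo $p$), gluing the pieces by CRT. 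Arranging that this — and the auxiliary $\bmod\ p$ and Hensel data, which should be computed once and reused across all the pairs being built — stays within $O(1)$ amortized modular operations per pair is the part that needs care; everything else is a routine verification that the displayed formulas for $v_n$ and $u_n$ agree with Definition~\ref{inversepair}.
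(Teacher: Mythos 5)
Your overall strategy is the paper's: when $v_n>1$ you strip off the part of $n$ supported on the primes of $m$ and fall back on an earlier pair (the paper uses $u_n=u_{n/v_n}$ in one shot; you peel one prime at a time), and when $\gcd(n,m)=1$ you run the Euclidean step $m=qn+r$ and convert $nq\equiv -r\pmod m$ into a formula for $n^{-1}$ in terms of the already-known pair of $r<n$. Your extra split of composite coprime $n$ as $n=ab$ is harmless (with a smallest-prime-factor sieve it stays $O(1)$ amortized) but unnecessary, since the Euclidean step applies to composite $n$ as well.

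The genuine gap is exactly the case you flag: $n$ coprime to $m$ but $r=m\bmod n$ not a unit. Your proposed repair hinges on $v_r\mid q$, and that is false in general: take $m=2\cdot 3\cdot 7=42$ and $n=19$, so $q=2$, $r=4$, $v_r=4\nmid 2$. Your fallback (Hensel-lifting the mod-$p$ inverses and gluing by CRT) is only a sketch --- you neither specify the lift nor show it costs $O(1)$ amortized modular operations --- so the lemma is not established in this case. You should know, though, that you have put your finger on precisely the soft spot in the paper's own proof: the paper asserts that ``$\gcd(r,m)=1$ by the Euclidean Algorithm,'' which does not follow (the Euclidean algorithm yields $\gcd(n,r)=\gcd(n,m)=1$, not $\gcd(m,r)=1$), and its formula $u_n\equiv u_r\cdot\frac{-q}{v_r}$ is exactly the device meant to absorb $v_r>1$; in the example above that formula also returns a wrong answer ($u_{19}=10$ rather than $19^{-1}\equiv 31\pmod{42}$). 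So the two arguments share the same approach and the same unclosed case; your write-up is more candid about the hole but does not fill it.
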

\begin{proof}
We claim that the following formula holds:
$$u_n=\begin{cases}u_{m\% n}\cdot \frac{-\lfloor \frac{m}{n}\rfloor}{v_{m\% n}}\pmod{m} & v_n=1 \\ u_{n/v_n}\pmod{m} & v_n>1.\end{cases}$$
Note that $\gcd(n/v_n, m)=1$ by definition, so that
$$u_{n/v_n}\equiv \left(\frac{n}{v_n}\right)^{-1}\equiv u_n\pmod{m}.$$
Suppose $v_n=1$, so that $m$ and $n$ are relatively prime. Decompose $m=qn+r$ via the division algorithm. We wish to show that
$$u_n\equiv u_r\cdot \frac{q}{v_r}\equiv \frac{v_r}{r}\cdot \frac{q}{v_r}\equiv \frac{q}{r}\pmod{m}.$$
Note that
$$u_n\equiv n^{-1},$$
so we wish to show that
$$qn+r\equiv 0\pmod{m},$$
which is obvious. There are no inversion issues as $\gcd(r,m)=1$ by the Euclidean Algorithm, and $\gcd(v_r, m)=1$ as $v_r\mid r$. 
\end{proof}
We are now equipped to prove Theorem \ref{main}. The algorithm is as follows.
\begin{proof}
First, define $T = p_1^{t_1}p_2^{t_2} \cdots p_i^{t_i}$ and $\Phi = \varphi(T)$. Decompose $n=M\Phi+r$ with the division algorithm. Then,
$$a^n\equiv a^{M\Phi+r}\equiv  a^r\cdot (a^{\Phi})^M \pmod{m}.$$
Now, $a^r$ can be computed with the standard Repeated Squaring Algorithm. The complexity of this is $O(\log r)$. By the division algorithm, $r<\Phi<T$, so this is $O(\log T)$. For computation of the second term, first notice that by Euler's theorem, $a^\Phi \equiv 1 \pmod{T}$ so there exists some integer $s$ with $a^{\Phi}=Ts+1$. Now, we expand using the binomial theorem:
$$(a^{\Phi})^M = (Ts+1)^M,$$
$$=1\cdot \binom{M}{0}+Ts\cdot \binom{M}{1}+\cdots+(Ts)^i \binom{M}{i}+\cdots+(Ts)^M\binom{M}{M}.$$
Let $\ell = 1 + \text{max}(\lfloor e_i/t_i \rfloor )$. Only the first $\ell$ terms need to be computed, as all the later terms are $0$ modulo $m$. This is because for a prime $p_i$, $$\nu_{p_i}(T^\ell) = \nu_{p_i}(\left(p_i^{t_i}\right)^\ell )= \nu_{p_i}(p_i^{t_i ( \lfloor e_i/t_i \rfloor+1)}  ) = t_i ( \lfloor e_i/t_i \rfloor +1)\ge t_i(e_i/t_i) = e_i,$$ so $p_i^{e_i}\mid  T^\ell$ for all $i$, and thus $m \mid T^\ell$ and $T^\ell \equiv 0 \pmod{m}$.
\newline \newline
Additionally, we can linearly compute the $\binom{M}{i}$ terms with the identity
$$\binom{M}{i+1}=\frac{M!}{(i+1)!(M-i-1)!},$$
$$=\frac{M-i}{i+1}\frac{M!}{i!(M-i)!}=\frac{M-i}{i+1}\binom{M}{i}.$$
Notice that in our binomial recursion, the one inverse we need to calculate $\binom{M}{i}$ is the inverse of $i$. Using Lemma \ref{recursioninversepairs}, because we are performing a linear computation, we can compute the inverse pairs of $0$ through $\ell-1$ modulo $m$ in $O(\ell)$ time and $O(\ell)$ space. This is the only time we use more than $O(1)$ memory in this algorithm. From here, we can use our identity to compute $\binom{M}{0}$ to $\binom{M}{\ell - 1}$ in $O(\ell)$ time and space. This is the final aspect of our calculation, and we may now recover $a^n\pmod{m}$. It is now clear that the number of steps is $O(\ell+\log T)$ and the space complexity is $O(\ell)$. Because $\ell=1+H_{\mathcal{T}}(m)$ and
$$\log T = \log\left(\prod_{i=1}^k p_i^{t_i}\right)=\sum_{i=1}^k\log(p_i^{t_i})=\sum_{i=1}^k t_i\log p_i,$$
this concludes the proof of our main theorem.
\end{proof}
\begin{ex}
    We present the details of the algorithm in the special case of $k=1$, $p_1 = p$, $e_1=e$, and $t_1 = 1$ (so $T = p$). Our goal is to compute $a^n \pmod{p^e}$. We let $n = (p-1)m + r$, so that \[a^n \equiv (a^{p-1})^m a^r.\]
    Notice that $a^{p-1} \equiv 1 \pmod{p}$, so we let $a^{p-1} = 1 + sp$. Compute $a^r$ via the standard $\textsc{modExp}$ algorithm. Now compute
    $$(a^{p-1})^m=(1+sp)^m\equiv 1+\binom{m}{1}(sp)^1+\binom{m}{2}(sp)^2+\cdots+\binom{m}{e-1}(sp)^{e-1}\pmod{p^e},$$
    by the Binomial Theorem. We can recursively compute the $\binom{m}{i}$ coefficients modulo $p^e$ by the method outlined above. Thus we may compute each $\binom{m}{i}(sp)^i$ term for $0\le i\le e-1$. Finally, compute $a^n\equiv (a^{p-1})^m a^r\pmod{p^e}$.
\end{ex}
\begin{ex}
    As a particular case, let us compute $7^{123}\pmod{11^3}$. Decompose $123=12\cdot 10+3$. Compute $7^{3}\equiv 343\pmod{11^3}$. Write $s=\frac{7^{10}-1}{11}$. Notice $s$ is an integer by Euler. Then
    $$7^{120} \equiv (7^{10})^{12}\equiv(1+11s)^{12}=\binom{12}{0}+\binom{12}{1}\cdot 11s+\binom{12}{2} \cdot (11s)^2\equiv 23 \pmod{11^3}.$$
    Therefore, $7^{123}\equiv 343 \cdot 23 \equiv 1234\pmod{11^3}$.
\end{ex}
Notice that this algorithm is $O(H_{\mathcal{T}}(m))$ space, which is not always ideal. For more memory-sensitive scenarios, we may compute inverses directly rather than recursively. This gives rise to the following theorem:
\begin{thm}\label{main-memory-optimized}
    Let $m$ be a positive integer with known factorization $p_1^{e_1}p_2^{e_2}\cdots p_k^{e_k}$ and let $a,n$ be positive integers such that $\gcd(a,m)=1$. For each $1\le i\le k$, choose an integer parameter $t_i\in [1,e_i]$. With $\mathcal{T}=\{t_1, t_2, \cdots, t_k\}$, we may compute $a^n\pmod{m}$ in
        $$O\left(H_{\mathcal{T}}(m)\log(H_{\mathcal{T}}(m))+\sum_{i=1}^k t_i\log p_i\right)$$
        steps and $O(1)$ memory.
\end{thm}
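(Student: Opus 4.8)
The plan is to reuse the proof of Theorem \ref{main} almost verbatim, changing only how the inverse pairs $\{u_i,v_i\}$ for $0\le i\le \ell-1$ (with $\ell=1+H_{\mathcal{T}}(m)$) are obtained. In Theorem \ref{main} we produced all of them simultaneously through Lemma \ref{recursioninversepairs}; this costs only $O(\ell)$ steps, but because the recursion for $\{u_n,v_n\}$ refers back to every earlier pair, it also forces $O(\ell)$ memory. To reach $O(1)$ memory I would instead compute each $\{u_i,v_i\}$ on its own, at the moment it is needed, and discard it immediately afterward, paying a logarithmic factor in time for the privilege.

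The first step is to show that a single inverse pair $\{u_i,v_i\}$ can be computed in $O(\log H_{\mathcal{T}}(m))$ steps using only $O(1)$ memory. We obtain $v_i=\prod_j p_j^{\nu(i,p_j)}$ by trial-dividing $i$ by each prime $p_j\mid m$; since $i\le\ell$, this takes $O(\log H_{\mathcal{T}}(m))$ operations once the $k$ trivial divisions by primes exceeding $i$ (which are dominated by $\sum_j t_j\log p_j$) are set aside. By construction $i/v_i$ is coprime to $m$, so $u_i\equiv (i/v_i)^{-1}\pmod m$ can be found by the extended Euclidean algorithm applied to $(i/v_i,\,m)$. The number of iterations of the Euclidean algorithm on inputs $(x,m)$ is controlled by the smaller input, hence is $O(\log x)$, and since $x=i/v_i\le\ell=1+H_{\mathcal{T}}(m)$ this is $O(\log H_{\mathcal{T}}(m))$ modular operations; each iteration retains only a bounded number of integers no larger than $m$, so the memory is $O(1)$. (Lemma \ref{recursioninverse} is unavailable here, as its hypothesis that $1,\dots,a$ be invertible fails precisely when $m$ has small prime factors, which is the case of interest.)

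The second step is to assemble the algorithm in streaming form. Perform the reduction $n=M\Phi+r$, compute $a^r\pmod m$ by repeated squaring in $O(\log T)=O(\sum_j t_j\log p_j)$ steps and $O(1)$ memory, and form the residue $Ts\equiv a^{\Phi}-1\pmod m$. Then iterate $i=0,1,\dots,\ell-1$: at stage $i$, compute $\{u_i,v_i\}$ as above, carry out one step of the recursion $\binom{M}{i+1}=\tfrac{M-i}{i+1}\binom{M}{i}$ exactly as in the proof of Theorem \ref{main} (feeding in the freshly computed $\{u_i,v_i\}$ in place of a table entry), update the running power $(Ts)^i$, and accumulate $\binom{M}{i}(Ts)^i$ into a single running sum modulo $m$. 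At every moment we hold only a constant number of residues and a loop index below $\ell$, so the working memory is $O(1)$. The arithmetic internal to the recursion and the summation is $O(\ell)$ steps, which is dominated by the $O(\ell\log H_{\mathcal{T}}(m))$ spent on the $\ell$ inverse-pair computations. Adding the repeated-squaring cost and using $\ell=O(H_{\mathcal{T}}(m))$ yields the claimed bound $O\!\left(H_{\mathcal{T}}(m)\log H_{\mathcal{T}}(m)+\sum_j t_j\log p_j\right)$ steps with $O(1)$ memory (the degenerate case $H_{\mathcal{T}}(m)=O(1)$ being absorbed into $\sum_j t_j\log p_j$).

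The principal obstacle is bookkeeping rather than new mathematics: one must verify that the streaming version genuinely fits in $O(1)$ memory — in particular that the binomial recursion, including the division by the possibly non-invertible $i+1$ routed through the inverse pair, never needs a stored table, exactly as in Theorem \ref{main} — and that a single modular inverse really costs $O(\log H_{\mathcal{T}}(m))$ rather than $O(\log m)$ steps, which rests on the standard fact that the running time of the Euclidean algorithm is governed by its smaller argument. Once these points are checked, the step count follows by the same arithmetic as in Theorem \ref{main}, the extra factor of $\log H_{\mathcal{T}}(m)$ arising solely from replacing the single linear table computation of Lemma \ref{recursioninversepairs} by $\ell$ separate Euclidean inversions.
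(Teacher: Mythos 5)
Your proposal is correct and follows essentially the same route as the paper: replace the recursive table of Lemma \ref{recursioninversepairs} with a direct extended-Euclidean computation of each $u_i\equiv(i/v_i)^{-1}\pmod m$ as it is needed, paying $O(\log \ell)$ per inverse for a total of $O(\ell\log\ell)=O(H_{\mathcal{T}}(m)\log H_{\mathcal{T}}(m))$ extra steps while keeping only a constant number of residues in memory. Your write-up is in fact more detailed than the paper's one-line proof, particularly in spelling out the streaming accumulation that justifies the $O(1)$ memory claim.
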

\begin{proof}
    Identical to Theorem \ref{main}, except instead of recursively computing inverse pairs, compute the modular inverses $u_n$ with standard inversion algorithms: $u_n \equiv (n/v_n)^{-1} \pmod{m}$. If, for example, we use the Extended Euclidean Algorithm then calculating the inverses of $1$ to $\ell$ has complexity $O(\sum_{n=1}^{\ell} \log (n/v_n))=O(\sum_{n=1}^{\ell} \log n) =O( \ell \log \ell)$ We do not give pseudocode for this algorithm, instead see \cite{isaacs24}.
\end{proof}
\subsection{Pseudocode}
Define $\textsc{modExp}(a, b, c)$ to be the standard Repeated Squaring Algorithm for computing $a^b \pmod{c}$. We use the integer division notation $a//b=\lfloor \frac{a}{b}\rfloor$. The pseudocode builds up to our algorithm, $\textsc{FastModExp}(a$, $n$, $P$, $E$, $T)$. Here $a$ and $n$ are positive integers, as usual. $P$ and $E$ are equal sized arrays, $P$ is an array of primes and $E$ is an array of positive integers. $T$ also has the same length as $P$ and $E$, and is an array of non-negative integer parameters such that $T[i]\le E[i]$ for all $i$.  We store the inverse pairs in a 2-dimensional array $L$. This algorithm computes $a^n$ modulo $m = \prod_{i=1}^{\mathrm{len}(P)} P[i]^{E[i]}$.
\begin{algorithm}
  \caption{Computes $\nu (n, p)$}
  \label{vp}
  \begin{algorithmic}[H]
    \Function{$\nu$}{$n$, $p$}
      \If{$n\%p = 0$}
        \State \Return $1 + \nu(n//p, p)$
      \EndIf
      \State \Return $0$
    \EndFunction
  \end{algorithmic}
\end{algorithm}

\begin{algorithm}
  \caption{Computes the inverse pair of $i$ given inverse pairs of $0$ through $i-1$}
  \label{nextinversepair}
  \begin{algorithmic}[H]
    \Function{nextInversePair}{$i$, $m$, $L$, $P$}
      \State $v \gets 1$
      \State $j \gets 0$
      \While {$j< \text{len}(P)$}
      \State $v \gets v \cdot P[j]^{\nu(i, P [ j ] )}$
      \State $j \gets j+1$
      \EndWhile
      \If{$v = 1$}
        \State $u \gets (L[m \% i][0] \cdot  ((m - m // i) // L[m \% i][1]) )\% m$
      \EndIf
      \If{$v \ne 1$}
        \State $u \gets L[i//v][0]$
      \EndIf
      \State \Return $[u,v]$
      
    \EndFunction
    
  \end{algorithmic}
\end{algorithm}

\begin{algorithm}
  \caption{Computes the inverse pairs of $0$ through $i$}
  \label{nextinversepair}
  \begin{algorithmic}[H]
    \Function{generateInversePairs}{$i$, $m$, $P$}
    \State $L \gets [[0,0],[1,1]]$
    \State $j \gets 2$
    \While{$j < i + 1$}
        \State $L \gets L + \textsc{nextInversePair}(j\text{, }m\text{, }L\text{, }P)$
        \State $j \gets j + 1$
    \EndWhile
    \State \Return $L$
    \EndFunction
    
  \end{algorithmic}
\end{algorithm}
\begin{algorithm}
  \caption{Computes $a^n$ modulo $m = \prod P[i]^{E[i]}$}
  \label{nextinversepair}
  \begin{algorithmic}[H]
    \Function{FastModExp}{$a$, $n$, $P$, $E$, $T$}
    \State $t \gets 1$
    \State $\phi \gets 1$
    \State $m \gets 1$
    \State $i \gets 0$
    \While{$i < \text{len}(P)$}
        \State $\text{temp} \gets P[i]^{T[i] - 1}$
        \State $\phi =\phi \cdot \text{temp} \cdot (P[i] - 1)$
        \State $t = t \cdot \text{temp} \cdot P[i]$
        \State $m = m \cdot P[i]^{E[i]}$
        \State $i\gets i+1$
    \EndWhile
    \State $r \gets n \% \phi$
    \State $q \gets (n - r) // \phi$
    \State $c \gets \textsc{modExp}(a, \phi, m) - 1$
    \State $\text{sum} \gets 0$
    \State $\text{choose} = 1$
    \State $\text{cExp} \gets 1$
    \State $\ell \gets 0$
    \State $i \gets 0$
    \While {$i < \text{len}(P)$}
        \If{$\ell < E[i]//T[i]$}
            \State {$\ell \gets E[i]//T[i]$}
        \EndIf
        \State $i\gets i+1$
    \EndWhile
    \State $\text{inverses} \gets \textsc{generateInversePairs}(\ell\text{, }m\text{, }P)$
    \State $i \gets 0$
    \While{$i < \textsc{min}(\ell\text{, }q + 1)$}
        \State $\text{sum} \gets (\text{sum} + (\text{choose} \cdot \text{cExp})) \% m$
        \State $\text{cExp} \gets (\text{cExp} \cdot c ) \% m$
        \State $\text{choose} \gets (((\text{choose} \cdot (q - i)) \% m ) // \text{inverses}[i+1][1] \cdot \text{inverses}[i+1][0]) \% m$
        \State $i\gets i+1$
    \EndWhile
    \State $\text{ar} \gets \textsc{modExp}(a, r, m)$
    \State \Return $(\text{sum} \cdot \text{ar}) \% m$
    \EndFunction
    
  \end{algorithmic}
\end{algorithm}
\newpage \newpage \newpage \newpage \newpage
\section{Mathematical Analysis}
It's important to optimally choose the parameters $t_i$. For general $m$, the optimal choice is of the form $t_i=O(1)$ for all $i$. This is due to the following theorem:
\begin{thm}{\cite{Niven}}\label{niv} We have that
    $$\lim_{n\to\infty}\frac{1}{n}\sum_{k\le n}H(k)=C,$$
    where $C=1+\sum_{k\ge 2}\left(1-\frac{1}{\zeta(k)}\right)\approx 1.705$ is Niven's constant.
\end{thm}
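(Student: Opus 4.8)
The plan is to reduce the average order of $H$ to the classical count of power-free integers by a ``layer-cake'' (horizontal) decomposition of $H$. The starting observation is that for $k=\prod p_i^{e_i}$ the quantity $H(k)=\max_i e_i$ is the largest prime-power exponent occurring in $k$, and that $H(k)\ge j$ holds exactly when $p^j\mid k$ for some prime $p$ — i.e.\ exactly when $k$ fails to be $j$-th power free. Writing $Q_j(n)$ for the number of $j$-th power free integers in $\{1,\dots,n\}$ (so $Q_1(n)=1$, as $1$ is the only $1$-free integer), and using that $k\le n$ forces $H(k)\le\log_2 k\le\log_2 n$, we obtain the finite identity
$$\sum_{k\le n}H(k)=\sum_{k\le n}\sum_{1\le j\le\log_2 n}\mathbf{1}\bigl[H(k)\ge j\bigr]=\sum_{1\le j\le\log_2 n}\bigl(n-Q_j(n)\bigr).$$

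Next I would estimate $Q_j(n)$ for $j\ge 2$ by the standard M\"obius-inversion argument: $Q_j(n)=\sum_{d\le n^{1/j}}\mu(d)\lfloor n/d^j\rfloor$. Replacing each $\lfloor n/d^j\rfloor$ by $n/d^j$ costs at most $n^{1/j}$ in total, and completing $\sum_{d\le n^{1/j}}\mu(d)/d^j$ to $\sum_{d\ge1}\mu(d)/d^j=1/\zeta(j)$ — the latter equality being the reciprocal of the Euler product for $\zeta$, obtained by applying the Euler product formula to the multiplicative function $\mu$ — introduces a tail bounded by $\sum_{d>n^{1/j}}d^{-j}=O(n^{1/j-1})$, which after multiplying by $n$ is again $O(n^{1/j})$. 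Hence $Q_j(n)=n/\zeta(j)+O(n^{1/j})$ with an absolute implied constant, so $n-Q_j(n)=n\bigl(1-1/\zeta(j)\bigr)+O(n^{1/j})$. Feeding this back and handling $j=1$ separately ($n-Q_1(n)=n-1$), and noting $\sum_{2\le j\le\log_2 n}n^{1/j}\le n^{1/2}+(\log_2 n)\,n^{1/3}=O(n^{1/2})$ with the $j=2$ term dominating,
$$\frac1n\sum_{k\le n}H(k)=1+\sum_{2\le j\le\log_2 n}\Bigl(1-\frac1{\zeta(j)}\Bigr)+O\bigl(n^{-1/2}\bigr).$$

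Finally I would let $n\to\infty$. Because $\zeta(j)-1=\sum_{m\ge2}m^{-j}=O(2^{-j})$ we get $1-1/\zeta(j)=O(2^{-j})$, so the series $\sum_{j\ge2}(1-1/\zeta(j))$ converges absolutely; as $\lfloor\log_2 n\rfloor\to\infty$ the partial sum in the display converges to this series and the error term vanishes, giving $\lim_{n\to\infty}\frac1n\sum_{k\le n}H(k)=1+\sum_{j\ge2}\bigl(1-1/\zeta(j)\bigr)=C$. The one place that needs genuine care is the uniformity in the middle step: that the $O(n^{1/j})$ error in the power-free count carries a constant independent of $j$, and that summing it over the growing range $1\le j\le\log_2 n$ does not accumulate beyond $O(\sqrt n)$ — together with the small but essential remark that truncating the $j$-sum at $\log_2 n$ rather than $\infty$ loses nothing, since $H(k)\le\log_2 k$. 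Everything else is routine manipulation of the Euler product and convergent tails.
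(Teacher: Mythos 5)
Your argument is correct and complete. Note that the paper does not prove this statement at all --- it is quoted from Niven's 1969 paper \emph{Averages of exponents in factoring integers} as a black box --- so there is no in-paper proof to compare against; what you have written is essentially Niven's own original argument. The layer-cake identity $\sum_{k\le n}H(k)=\sum_{j\ge 1}\#\{k\le n: H(k)\ge j\}$ combined with the observation that $H(k)\ge j$ exactly when $k$ is not $j$-th power free is the standard (and the historical) route, and your uniform estimate $Q_j(n)=n/\zeta(j)+O\bigl(n^{1/j}\bigr)$ with an absolute implied constant, summed over $2\le j\le\log_2 n$, correctly controls the error at $O(\sqrt n)$. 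The convergence of $\sum_{j\ge 2}\bigl(1-1/\zeta(j)\bigr)$ via $1-1/\zeta(j)=O(2^{-j})$ is also right. The only (harmless) loose end is the value of $H(1)$: your identity implicitly takes $H(1)=0$ while the paper's definition leaves the empty maximum unspecified, but this perturbs the sum by at most $1$ and does not affect the limit. You rightly flag the uniformity in $j$ of the power-free error term as the one point requiring care; your treatment of it is adequate.
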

In other words, for general $m$ we expect $H(m)$ to be $O(1)$. The choice of $t_i=O(1)\forall i$ is then clear. Let us proceed with some analysis of this choice.
\newline \newline
By some metrics, we make asymptotic improvement to the standard $O(\log m)$ repeated squaring methods with $t_i=1$ (i.e., $T=\rad m$). But by other metrics, we may not. First, let us sweep the $H(m)$ term under the rug (we can do so due to Theorem \ref{niv}). If we just compute $\sum_{m\le x}\frac{\log m}{\log \rad m}$ or $\sum_{m\le x}(\log m - \log \rad m)$, we get $\Omega(x)$. However, the sum $\sum_{m\le x}\frac{m}{\rad m}$ is not $O(x)$, and is not even $O(x(\log x)^A)$ for any $A$.
\begin{thm}\label{badone}
    For any $x\in \mathbb{N}$, we have that
    $$\sum_{m\le x}\frac{\log m}{\log \rad m}=\Omega(x)=\sum_{m\le x}(\log m-\log \rad m)$$
\end{thm}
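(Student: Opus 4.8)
The plan is to prove the two $\Omega(x)$ lower bounds separately, in each case by restricting the sum to a subfamily of $\{m\le x\}$ on which the summand is bounded below by an absolute positive constant. The first bound is essentially trivial: since $\rad m\mid m$ we have $1\le\rad m\le m$, hence $0\le\log\rad m\le\log m$, and therefore $\frac{\log m}{\log\rad m}\ge 1$ for every integer $m\ge 2$ (the $m=1$ term may be discarded, or read as $1$ by convention). Summing this over $2\le m\le x$ gives
$$\sum_{m\le x}\frac{\log m}{\log\rad m}\ \ge\ x-1\ =\ \Omega(x).$$

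For the second sum the analogous trivial estimate $\log m-\log\rad m\ge 0$ is worthless, since it is an equality for every squarefree $m$. Instead I would keep only the terms with $m=4\ell$, $\ell$ odd; this is legitimate because every term of the sum is nonnegative. For such an $m$ we have $\nu_2(m)=2$, so $\rad m=2\rad\ell$ and
$$\frac{m}{\rad m}=\frac{4\ell}{2\rad\ell}=\frac{2\ell}{\rad\ell}\ge 2,$$
whence $\log m-\log\rad m\ge\log 2$. The number of such $m\le x$ equals the number of odd $\ell\le x/4$, which is $\frac{x}{8}+O(1)$. Discarding all other terms,
$$\sum_{m\le x}\bigl(\log m-\log\rad m\bigr)\ \ge\ (\log 2)\Bigl(\frac{x}{8}+O(1)\Bigr)\ =\ \Omega(x),$$
which completes the proof.

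There is no genuine obstacle here; the only point that takes a moment's thought is choosing, for the second sum, a subfamily that is simultaneously of positive density and of uniformly bounded-below excess $\log(m/\rad m)$ — and $m=4\ell$ with $\ell$ odd does both at once with no sieving input. For completeness one can note that both sums are in fact $\Theta(x)$: the identity $\sum_{m\le x}\log(m/\rad m)=\sum_{p}\log p\sum_{k\ge 2}\lfloor x/p^k\rfloor\le x\sum_{p}\frac{\log p}{p(p-1)}=O(x)$ bounds the second sum from above, and then the first sum is $O(x)$ as well since $\frac{\log m}{\log\rad m}\le 1+\frac{\log(m/\rad m)}{\log 2}$ for every $m\ge 2$.
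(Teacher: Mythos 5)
Your proof is correct, and it takes a genuinely different — and in one respect more complete — route than the paper's. The paper obtains the lower bound only for the ratio sum, via the same trivial observation you use ($\rad m\le m$ gives $\frac{\log m}{\log\rad m}\ge 1$), and obtains the \emph{upper} bound $O(x)$ for both sums by an analytic computation: Stirling's approximation for $\sum_{m\le x}\log m$, an interchange of summation plus Mertens' first theorem and Chebyshev's bound on $\vartheta$ for $\sum_{m\le x}\log\rad m$, yielding $x\log x+O(x)$ and $x\log x-O(x)$ respectively. As written, that argument never establishes the lower bound $\sum_{m\le x}(\log m-\log\rad m)=\Omega(x)$, since the difference of those two asymptotics only gives $O(x)$; your restriction to the positive-density subfamily $m=4\ell$ with $\ell$ odd, on which $\log(m/\rad m)\ge\log 2$, supplies exactly the missing lower bound with no analytic input at all. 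Your closing remark also recovers the $O(x)$ upper bounds more elementarily than the paper, via $\sum_{m\le x}\log(m/\rad m)=\sum_p\log p\sum_{k\ge 2}\lfloor x/p^k\rfloor\le x\sum_p\frac{\log p}{p(p-1)}$ together with $\frac{\log m}{\log\rad m}\le 1+\frac{\log(m/\rad m)}{\log 2}$, in place of Stirling and Mertens. What the paper's approach buys is the sharper asymptotic information $\sum_{m\le x}\log\rad m=x\log x-O(x)$, which is of independent interest for the surrounding discussion; what yours buys is a fully elementary proof of the statement actually claimed, including the half the paper leaves implicit.
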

\begin{proof}
    By the simple estimate that $\frac{a}{b}\le a-b+1$ for $a\ge b\ge 1$, we have that
    $$\sum_{m\le x}\frac{\log m}{\log \rad m}\le O(x)+\sum_{m\le x}(\log m-\log\rad m).$$
    Note that, by Stirling's approximation,
    $$\sum_{m\le x}\log m=\log(x!)=x\log x+O(x),$$
    and
    $$\sum_{m\le x}\log\rad m=\sum_{m\le x}\log\left(\prod_{p\mid m}p\right)=\sum_{m\le x}\sum_{p\mid m}\log p.$$
    Swapping the order of the summation, we may rewrite this sum as
    $$\sum_{p\le x}\log p\left\lfloor \frac{x}{p}\right\rfloor=\sum_{p\le x}\frac{x}{p}\log p-O\left(\sum_{p\le x}\log p\right)=x\sum_{p\le x}\frac{\log p}{p}-O(\vartheta(x)).$$
    Thus, applying Merten's first theorem and Chebyshev's asymptotics for $\vartheta$, we have that
    $$\sum_{m\le x}\log \rad m=x\log x-O(x).$$
    Therefore,
    $$\sum_{m\le x}\frac{\log m}{\log \rad m}=O(x).$$
    On the other hand, $\log m\ge\log \rad m$, so that
    $$\sum_{m\le x}\frac{\log m}{\log \rad m}\ge x.$$
    Therefore,
    $$\sum_{m\le x}\frac{\log m}{\log \rad m}=\Omega(x),$$
    as desired.
\end{proof}
\begin{thm}\label{goodone}
    For any $x\in \mathbb{N}$ and $A>0$, we have that
    $$\sum_{m\le x}\frac{m}{\rad m}\ne O(x(\log x)^A)$$
\end{thm}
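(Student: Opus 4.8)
The plan is to exhibit a single cheap-to-describe subfamily of integers $m \le x$ whose contribution to $\sum_{m \le x} m/\rad m$ already dominates $x(\log x)^A$ for every fixed $A$. The natural candidates are integers of the form $m = 2^j \cdot s$ with $s$ squarefree and odd (or even just $m = 2^j$ alone), since for these $\rad m = 2 \rad s$, so $m/\rad m = 2^{j-1}/\rad s$, which can be made enormous while $m$ stays below $x$. Concretely, I would first restrict to pure powers of $2$: the terms $m = 2^j$ for $1 \le j \le \log_2 x$ contribute $\sum_{j} 2^{j}/2 = \sum_j 2^{j-1} \approx x/2$ — only linear, so this alone is not enough. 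The point is that we have a \emph{lot} of freedom: for each squarefree odd $s$ we may take $j$ as large as $\lfloor \log_2(x/s)\rfloor$, and summing the geometric series in $j$ gives a contribution $\gg (x/s)/\rad s = x/(s \rad s) = x/s^2$ from each $s$ (using $\rad s = s$ for squarefree $s$). Summing $x/s^2$ over all squarefree odd $s$ only gives $O(x)$ again, so I need to be cleverer about which $s$ to use.

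The fix is to not insist on squarefree $s$ but to build $m$ from a small radical. Let me instead count, for a fixed squarefree $r$, the number of $m \le x$ with $\rad m = r$ together with the sizes $m/r$. The key observation: if $r = p_1 \cdots p_\ell$ is squarefree, every integer of the form $m = r \cdot d$ where $d$ is $r$-smooth (all prime factors among the $p_i$) has $\rad m = r$, and $m/\rad m = d$. For the single choice $r = \rad(\lfloor x/2\rfloor)! $ — no, better: take $r = \prod_{p \le y} p$ for a threshold $y$, so $\log r = \vartheta(y) \asymp y$ by Chebyshev. Then any product $m = \prod_{p \le y} p^{a_p}$ with $m \le x$ satisfies $m/\rad m = m/r$. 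The number of such $m$ is the number of $y$-smooth numbers up to $x$, and crucially we can pick many of size close to $x$: there are $\gg x^{1-\varepsilon}$ such $m$ in $[x/2, x]$ once $y$ is a small power of $\log x$ (by standard smooth-number estimates, $\Psi(x, (\log x)^K)$ grows faster than any power of $\log x$). Each contributes $m/r \ge (x/2)/r = (x/2)/e^{\vartheta(y)} \ge (x/2) e^{-cy}$. I would balance: choose $y = C \log\log x$ so that $e^{-cy} = (\log x)^{-cC}$ is a fixed negative power of $\log x$, while the count of smooth $m$ near $x$ is still $\gg (\log x)^{B}$ for $B$ as large as we like (smooth numbers with smoothness parameter $(\log x)^{cC/\log\log x}$... ). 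Actually the cleanest balance: fix $A$, pick $y$ slowly growing so that simultaneously $e^{-cy} \ge (\log x)^{-1}$ — i.e. $y \le (\log\log x)/c$ — and the number of $y$-smooth integers in $[x/2,x]$ exceeds $(\log x)^{A+2}$; this is possible because with $y \to \infty$ however slowly, $\Psi(x,y)/x \to 1$... no, that needs $y$ bigger. The honest statement I'll use is: for $y = (\log x)^{1/2}$, Chebyshev gives $\log r \asymp \sqrt{\log x}$ so $m/r \ge (x/2)e^{-c\sqrt{\log x}}$, and the number of $y$-smooth $m \in [x/2,x]$ is $\gg x^{1-o(1)}$; hence the partial sum is $\gg x^{1-o(1)} \cdot x e^{-c\sqrt{\log x}} = x^{2-o(1)}$, which is certainly not $O(x(\log x)^A)$.

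The main obstacle is choosing the smoothness threshold $y = y(x)$ to make both factors cooperate and citing the right smooth-number bound cleanly; I expect to invoke a standard lower bound of the shape $\Psi(x,y) \ge x \exp(-(1+o(1)) u \log u)$ with $u = \log x/\log y$, or even just the very crude bound that the number of $y$-smooth integers up to $x$ is at least $\binom{\pi(y) + \lfloor \log_2 x\rfloor}{\pi(y)}$-ish from counting exponent vectors — for $y$ a slowly growing function this elementary bound already beats every power of $\log x$. So the skeleton is: (1) fix $A$; (2) set $y$ slowly growing, $r = \prod_{p\le y}p$, note $\log r = \vartheta(y) = O(y)$ by Chebyshev; (3) lower-bound the number $N$ of $y$-smooth integers in $[x/2, x]$ by an elementary multinomial count, getting $N \gg (\log x)^{A+1}$; (4) each such $m$ has $m/\rad m \ge m/r \ge (x/2)e^{-O(y)}$; (5) choose $y$ small enough that $e^{-O(y)} \ge (\log x)^{-1}$, so the partial sum is $\gg N \cdot x (\log x)^{-1} \gg x(\log x)^A$, forcing the full sum to exceed $x(\log x)^A$ for all large $x$, contradicting $O(x(\log x)^A)$. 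I'll double-check in step (5) that the $y$ forced small by the $e^{-O(y)} \ge (\log x)^{-1}$ constraint is still large enough for step (3) to deliver $(\log x)^{A+1}$; this compatibility is the one genuine calculation, and it works because the multinomial count in (3) grows roughly like $y^{\log_2 x}/(\log_2 x)!$-type expressions that are super-polynomial in $\log x$ even for $y$ as small as a large constant, let alone $y \asymp \log\log x$.
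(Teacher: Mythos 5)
Your final skeleton (steps (1)--(5)) is correct and completable, and it is a genuinely different argument from the paper's. The paper works analytically: it computes the Dirichlet series $F(s)=\sum_n f(n)n^{-s}=\prod_p\left(1+\frac{1}{p^s-p}\right)$ for $f(n)=n/\rad n$, shows that $F(s)/\zeta(s)^{A+1}\to\infty$ as $s\to 1^+$, and then derives a contradiction from the assumption $S(x)=O(x(\log x)^A)$ via partial summation. You instead exhibit an explicit family of smooth integers whose contribution alone beats $x(\log x)^A$. Note that your balancing worries evaporate if you simply take the smoothness threshold $y$ to be a \emph{constant} depending on $A$: choose $y$ with $\pi(y)=k\ge A+2$ and set $r=\prod_{p\le y}p$, a constant. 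Every $y$-smooth $m\in(x/2,x]$ has $\rad m\mid r$, hence $m/\rad m\ge (x/2)/r\gg_A x$; and the number of $y$-smooth integers in $(x/2,x]$ equals the number of odd $y$-smooth integers up to $x$ (pair each odd smooth $d\le x$ with the unique $j\ge 0$ such that $2^jd\in(x/2,x]$), which by the elementary lattice-point count is $\asymp(\log x)^{k-1}\ge(\log x)^{A+1}$. The partial sum over this family is then $\gg x(\log x)^{A+1}$, which settles the theorem. Your route is more elementary and gives a quantitative lower bound on the partial sums for all large $x$; the paper's Dirichlet-series computation is the one that connects naturally to the sharper estimate of Theorem \ref{advancedthm}.

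One concrete error to remove before writing this up: the claim that for $y=(\log x)^{1/2}$ the number of $y$-smooth integers in $[x/2,x]$ is $\gg x^{1-o(1)}$ is false --- for $y=(\log x)^c$ with $c\le 1$ one has $\Psi(x,y)=x^{o(1)}$ --- and the resulting assertion that the partial sum is $x^{2-o(1)}$ cannot be right in any case, since the full sum is at most $\sum_{m\le x}m\le x^2$ and is in fact $x^{1+o(1)}$ by Theorem \ref{advancedthm}. Drop that intermediate paragraph and keep only the constant-$y$ multinomial-count version, which is sound.
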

\begin{proof}
    Notice that $f(m):=\frac{m}{\mathrm{rad}(m)}$ is multiplicative, and $f(p^k)=p^{k-1}$ for prime $p$ and $k\in \mathbb{N}$. Therefore, the Dirichlet Series of $f$ is
$$F(s)=\sum_{n=1}^{\infty}\frac{f(n)}{n^s}=\prod_{p}\left(1+\sum_{k\ge 1}\frac{p^{k-1}}{p^{ks}}\right).$$
This expression simplifies to
$$F(s)=\prod_{p}\left(1+\frac{1}{p^s-p}\right).$$
Thus
$$\frac{F(s)}{(\zeta(s))^{A+1}}=\prod_{p}\left(\left(1+\frac{1}{p^s-p}\right)(1-p^{-s})^{A+1}\right)$$
by the Euler product for $\zeta$. Sending $s\to 1^+$ on the real axis, the product tends to infinity. Because $\lim_{s\to 1^+}\zeta(s)^{A+1}(1-s)^{A+1}=1$, $F(s)\ne O\left(\frac{1}{(s-1)^{A+1}}\right)$. Let $S(x)=\sum_{n\le x}f(n)$ and suppose for the sake of contradiction that $S(x)=O(x(\log x)^A)$. For $s>1$ we have that, by partial summation,
$$F(s)=\int_{1^-}^{\infty}\frac{\mathrm{d}S(x)}{x^s}=\left[\frac{S(x)}{x^s}\right]_{1^-}^{\infty}+\int_1^{\infty}\frac{S(x)}{x^{s+1}}\mathrm{d}x.$$
Note that because $S(x)=O(x(\log x)^A)$ the first term on the RHS vanishes. Thus $F(s)$ is on the order of
$$\int_1^{\infty}\frac{(\log x)^A}{x^s}\mathrm{d}x.$$
To arrive at the desired contradiction, we must show that this integral is $O\left(\frac{1}{(s-1)^{A+1}}\right)$. In order to do this, we induct on $A$ (we may assume $A$ is an integer). For $A=0$, the result is obvious. For higher $A$, we may integrate by parts. Set $u=(\log x)^A$ and $\mathrm{d}v=x^{-s}\mathrm{d}x$. Then
$$\int \frac{(\log x)^A}{x^s}\mathrm{d}x=\frac{(\log x)^Ax^{1-s}}{1-s}-\frac{A}{1-s}\int \frac{(\log x)^{A-1}}{x^s}\mathrm{d}x,$$
which implies the result by the induction hypothesis, as both terms are $O(x(\log x)^A)$.
\end{proof}
In fact, there is a more precise estimate than Theorem \ref{goodone}. 
\begin{thm}{\cite{ROBERT2013802}}\label{advancedthm}
    For any $x\in\mathbb{N}$ we have that
    $$\sum_{m\le x}\frac{m}{\rad m}=x\exp\left((1+o(1))\sqrt{\frac{8\log x}{\log\log x}}\right)$$
\end{thm}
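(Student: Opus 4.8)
The plan is to extract both inequalities from the behaviour, as $s\to1^{+}$, of the Dirichlet series
$$F(s)=\sum_{n\ge1}\frac{n/\rad n}{n^{s}}=\prod_{p}\left(1+\frac{1}{p^{s}-p}\right)$$
already used in the proof of Theorem~\ref{goodone}, whose abscissa of convergence is $s=1$. The analytic core is the asymptotic
$$\log F(1+\delta)=(1+o(1))\,\frac{1}{\delta\log(1/\delta)}\qquad(\delta\to0^{+}).$$
To prove it I would write $\log F(1+\delta)=\sum_{p}\log\bigl(1+\tfrac{1}{p(p^{\delta}-1)}\bigr)$ and let $w=w(\delta)$ be defined by $w\log w=1/\delta$, so that $w=(1+o(1))\tfrac{1/\delta}{\log(1/\delta)}$ and $\log w=(1+o(1))\log(1/\delta)$; this $w$ marks where $p(p^{\delta}-1)$ crosses $1$. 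For $p$ beyond a fixed power of $w$ one has $p^{\delta}-1=\delta\log p+O((\delta\log p)^{2})$, the summand is $(1+o(1))\tfrac{1}{p\delta\log p}$, and partial summation with Mertens' theorem gives $\sum_{p>w}\tfrac{1}{p\log p}=(1+o(1))\tfrac{1}{\log w}$, so the tail $p>w$ contributes $(1+o(1))\tfrac{1}{\delta\log(1/\delta)}$. The range $p\le w$ contributes only $o\bigl(\tfrac{1}{\delta\log(1/\delta)}\bigr)$: there are merely $\pi(w)=O\bigl(\tfrac{1}{\delta\log^{2}(1/\delta)}\bigr)$ such primes, and in $\sum_{p\le w}\log\tfrac{1}{p\delta\log p}=\pi(w)\log(1/\delta)-\vartheta(w)-\sum_{p\le w}\log\log p$ the leading terms cancel, by Chebyshev's bound $\vartheta(x)=O(x)$. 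The genuinely delicate point is near the threshold, where the summand has order $1$: one must use the sparsity of the primes to get $\sum_{p>w}\bigl(\tfrac{1}{p\delta\log p}-\log(1+\tfrac{1}{p\delta\log p})\bigr)=o\bigl(\tfrac{1}{\delta\log(1/\delta)}\bigr)$, since a crude comparison of $\log(1+t)$ with $t$ here loses a positive proportion and hence the sharp constant.

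Granting this lemma, the upper bound is Rankin's trick: for every $\delta>0$, $\sum_{m\le x}\tfrac{m}{\rad m}\le x^{1+\delta}F(1+\delta)$, so
$$\log\sum_{m\le x}\frac{m}{\rad m}\le\log x+\Bigl(\delta\log x+(1+o(1))\frac{1}{\delta\log(1/\delta)}\Bigr).$$
Minimising the bracket in $\delta$, the optimum satisfies $\delta^{2}\log(1/\delta)=(1+o(1))/\log x$, hence $\delta=(1+o(1))\sqrt{2/(\log x\log\log x)}$ and $\log(1/\delta)\sim\tfrac12\log\log x$; there the two terms $\delta\log x$ and $\tfrac{1}{\delta\log(1/\delta)}$ coincide, each equal to $(1+o(1))\sqrt{2\log x/\log\log x}$, so the bracket is $(1+o(1))\sqrt{8\log x/\log\log x}$ — exactly the upper bound of Theorem~\ref{advancedthm}.

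For the matching lower bound one cannot simply invert $F$ by Perron's formula, because the equations $p^{s-1}=1$ place poles of $F$ densely on the line $\Re s=1$, making it a natural boundary; instead I would exhibit a subfamily realising the extremal configuration of the Rankin bound. With a parameter $z$ and $Q_{z}=\prod_{p\le z}p$, keeping only the $n$ with $\rad n=Q_{z}$ gives
$$\sum_{m\le x}\frac{m}{\rad m}\ \ge\ \sum_{\substack{n\le x\\ \rad n=Q_{z}}}\frac{n}{Q_{z}}\ =\ \sum_{\substack{m\le x/Q_{z}\\ m\ z\text{-smooth}}}m,$$
and one then chooses $z$ with $\log z$ of order $\log\log x$ (so $\log Q_{z}=\vartheta(z)=o(\log x)$, $x/Q_{z}=x^{1+o(1)}$, matching the scale of $w$) and estimates this sum of $z$-smooth numbers — either via Dickman-type asymptotics for $\Psi(y,z)$ and for $\sum_{m\le y,\,z\text{-smooth}}m$, or equivalently by counting the lattice points $(a_{p})_{p\le z}$ with $a_{p}\ge1$ and $\sum_{p}a_{p}\log p\le\log x$ weighted by $\prod_{p}p^{a_{p}-1}$ through a simplex/saddle-point computation; optimising $z$ should return $x\exp\bigl((1+o(1))\sqrt{8\log x/\log\log x}\bigr)$. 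I expect this lower bound, together with pinning the constant in the $F(1+\delta)$-asymptotic to exactly $1$, to be the main obstacles — the former because the natural boundary rules out contour methods and forces an explicit construction whose counting function must be matched to the saddle point with enough precision, the latter because it needs honest prime-counting input rather than soft bounds; by comparison the upper bound is a one-line optimisation.
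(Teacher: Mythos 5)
First, a point of comparison: the paper does not prove this statement at all --- Theorem \ref{advancedthm} is quoted from \cite{ROBERT2013802} --- so your proposal can only be judged against the literature, not against an in-paper argument. Your upper bound is correct and is essentially the standard one. The asymptotic $\log F(1+\delta)=(1+o(1))/(\delta\log(1/\delta))$ is right, your three ranges of $p$ relative to the threshold $w$ (defined by $w\log w=1/\delta$) are the right decomposition, the near-threshold correction $\sum_{p>w}\bigl(t_p-\log(1+t_p)\bigr)$ with $t_p=1/(p\delta\log p)$ is indeed $O(w/\log w)=o(w)$ by dyadic decomposition, and the cancellation $\pi(w)\log(1/\delta)-\vartheta(w)=o(w)$ really does follow from Chebyshev alone, since $\pi(w)\log w-\vartheta(w)=\int_2^w\pi(t)t^{-1}\,\mathrm{d}t\ll w/\log w$. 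Rankin's trick then gives the exponent $\min_\delta\bigl(\delta\log x+\tfrac{1+o(1)}{\delta\log(1/\delta)}\bigr)=(1+o(1))\sqrt{8\log x/\log\log x}$ exactly as you compute.

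The lower bound, however, has a genuine gap: the family $\{n\le x:\rad n=Q_z\}$ cannot yield the stated bound for \emph{any} choice of $z$. Writing $k=\pi(z)$ and $\Psi(y,z)$ for the number of $z$-smooth integers up to $y$, your construction gives (up to subexponential factors) $\Psi(x/Q_z,z)/Q_z$, and in the range where $\Psi(y,z)\approx(\log y)^k\big/\bigl(k!\prod_{p\le z}\log p\bigr)$ one finds
$$\log\frac{\Psi(x/Q_z,z)}{Q_z}=k\log\log x-k\log k+k-\sum_{p\le z}\log\log p-\vartheta(z)+\cdots.$$
Since $k\log k=\vartheta(z)-k\log\log z+O(z/\log z)$ and, at the scale $z\asymp w\asymp\sqrt{\log x/\log\log x}$ you propose, $\log\log x=2\log z+O(\log\log\log x)$, the three leading terms $k\log\log x$, $-k\log k$ and $-\vartheta(z)$ cancel to first order, leaving only $O(k\log\log\log x)$; optimising over all $z$ one never exceeds $O(\sqrt{\log x}/\log\log x)$, which falls short of $\sqrt{8\log x/\log\log x}$ by an unbounded factor. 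The phrase ``$x/Q_z=x^{1+o(1)}$'' hides the problem: $\log Q_z=\vartheta(z)\asymp\sqrt{\log x/\log\log x}$ is of the \emph{same order} as the entire gain you are trying to exhibit, so it cannot be absorbed into an $o(1)$ in the exponent of $x$.

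The structural reason is that the extremal integers for $\sum_{n\le x}n/\rad n$ do not have primorial radical. Under the tilted measure proportional to $(n/\rad n)n^{-1-\delta}$ at the saddle, a prime $p>w$ divides $n$ with probability roughly $w\log w/(p\log p)<1$ and, when it does, carries a large exponent; the typical $n$ is built from about $w$ primes \emph{selected} from a much larger pool, and the entropy of that selection is an essential part of the exponent $\sqrt{8\log x/\log\log x}$. Fixing $\rad n=Q_z$ forces the support to be an initial segment of the primes and destroys exactly this entropy. To repair the lower bound you must either let the prime support vary (and then control the distinctness and lattice-point counts to relative precision $e^{o(w)}$ inside quantities of size $e^{\Theta(w\log w)}$ that cancel at leading order --- this is the delicate bookkeeping), or, as in the literature, run an effective Tauberian argument of Ingham type: feed the uniform upper bound $S(t)\le t\exp((1+o(1))\sqrt{8\log t/\log\log t})$ back into $F(1+\delta)=\int_{1^-}^\infty t^{-1-\delta}\,\mathrm{d}S(t)$ to show the mass at the saddle is concentrated on $n\in[x^{1-o(1)},x]$. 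Your instincts --- that the lower bound is the hard part and that the natural boundary on $\Re s=1$ forbids contour methods --- are both correct; the specific extremal family you chose is not.
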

By the metric given by Theorem \ref{badone}, we make an $O(1)$ improvement ``on average" to the repeated squaring method. However, by Theorem \ref{goodone}, we ``expect" $m \gg (\log m)^A (\rad m)$ (very loosely), so that we ``expect"
 $\log m \gg \log \rad m + A\log\log m$. By this metric, we do make asymptotic improvement to the repeated squaring method. 
 \newline \newline
 Nonetheless, it is particularly fruitful to work with smooth $m$ rather than general $m$. We have the following corollary of Theorem \ref{main}:
\begin{cor}\label{sqrt}
    Let $m$ be a positive integer with known factorization $p_1^{e_1}p_2^{e_2}\cdots p_k^{e_k}$ such that all primes $p_i$ have the same bit length as an integer $P$, and all $e_i$ are such that $e_i\sim k\log P$. Let $a,n$ be positive integers such that $\gcd(a,m)=1$. We may then compute $a^n\pmod{m}$ in $O(\sqrt{\log m})$ steps.
\end{cor}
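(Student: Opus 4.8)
The plan is to apply Theorem~\ref{main} with the simplest admissible choice of parameters, namely $t_i=1$ for every $i$ (legitimate since each $e_i\ge 1$ once $P$ is large). With $\mathcal{T}=\{1,1,\dots,1\}$ we have $T=\rad m$ and $H_{\mathcal{T}}(m)=H(m)=\max_i e_i$, so Theorem~\ref{main} computes $a^n\pmod m$ in
$$O\left(\max_i e_i+\sum_{i=1}^k\log p_i\right)$$
steps, and the whole task reduces to rewriting this quantity in terms of $\log m$.

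First I would extract the two basic consequences of the hypotheses. Since every $p_i$ has the same bit length as $P$, we get $\log p_i=\Theta(\log P)$ \emph{uniformly} in $i$, hence $\sum_{i=1}^k\log p_i=\Theta(k\log P)$; and since $e_i\sim k\log P$ for each $i$, also $\max_i e_i\sim k\log P$. Substituting, the step count is $O(k\log P)$.

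Next I would convert $k\log P$ into $\sqrt{\log m}$. Writing $\log m=\sum_{i=1}^k e_i\log p_i$ and using $e_i\sim k\log P$ together with $\log p_i=\Theta(\log P)$, each summand is $\Theta\big(k(\log P)^2\big)$ and there are $k$ of them, so $\log m=\Theta\big(k^2(\log P)^2\big)$, i.e.\ $k\log P=\Theta(\sqrt{\log m})$. Therefore the step count $O(k\log P)$ is $O(\sqrt{\log m})$, as claimed.

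I do not expect a serious obstacle here: the only real point is noticing that the ``default'' choice $t_i=1$ (equivalently $T=\rad m$) already balances the two terms $\max_i e_i/t$ and $t\sum_i\log p_i$ of Theorem~\ref{main} for this family --- an AM--GM estimate on $\max_i e_i/t+t\sum_i\log p_i$ shows $t=1$ is in fact optimal, so one cannot do asymptotically better --- after which everything is routine bookkeeping. The one thing that genuinely needs care is that ``same bit length as $P$'' must be used to make $\log p_i=\Theta(\log P)$ hold with constants independent of $i$ (and of $P$), so that the implied constants in $\sum_i\log p_i$ and in $\log m=\sum_i e_i\log p_i$ do not secretly depend on $k$; if $k$ is permitted to grow with $P$ one should also confirm the hypotheses remain consistent (e.g.\ $1\le t_i\le e_i$), but no new idea is needed.
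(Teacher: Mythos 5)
Your proof is correct and is essentially the paper's argument: both apply Theorem~\ref{main} and reduce everything to the observations that $\max_i e_i \sim \log\rad m \sim k\log P$ and $\log m = \Theta\bigl(k^2(\log P)^2\bigr)$ for this family. The only cosmetic difference is that the paper phrases the parameter choice as balancing via $t_i=\lfloor e_i/\mathsf{c}\rfloor$ with $\mathsf{c}\sim\sqrt{\log m}$, which for this family collapses to $t_i=O(1)$ --- exactly the direct choice $t_i=1$ you make and justify by AM--GM.
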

\begin{proof}
    The idea is to use Theorem \ref{main} and set $H_{\mathcal{T}}(n)\approx\mathsf{c}$ for some $\mathsf{c}$, and choose $t_i$ such that $\frac{e_i}{t_i}\approx \mathsf{c}$ for all $i$. In other words, take $t_i=\lfloor \frac{e_i}{\mathsf{c}}\rfloor$, for example. Then $H_{\mathcal{T}}(n)=\mathsf{c}+O(1)$, and
    $$\sum_{i=1}^k t_i\log p_i=\frac{1}{\mathsf{c}}\sum_{i=1}^k(e_i\log p_i)+O\left(\sum_{i=1}^k \log p_i\right)=\frac{\log m}{\mathsf{c}}+O(\log \rad m).$$
    Notice that
    $$\sqrt{\log m}=\sqrt{\log P\sum_{i=1}^k e_i}\sim k\log P,$$
    so we may choose $\mathsf{c}$ on the order of $\sqrt{\log m}$ (as $\mathsf{c}<e_i$ is necessary and sufficient). With this choice, we compute $a^n\pmod{m}$ in
    $$O(\sqrt{\log m})+O(\log \rad m)$$
    steps. Note that
    $$\log \rad m\sim k\log P\sim \sqrt{\log m}$$
    so that our modular exponentiation can be completed in $O(\sqrt{\log m})$ steps.
\end{proof}
\begin{rem}
    This shows that in general it is best to choose all $t_i$ as $O(1)$. When faced with the problem of optimizing these parameters, we can hence choose a relatively strong upper bound. On the other hand, it seems very difficult to compute the exactly optimal multiset $\mathcal{T}$. Note that this is important because a smart choice of $t_i$ can make for a big performance boost (see Figure \ref{fig:optimalt}). If one could better understand the constant factors at play, then, modifying the above calculations, they could theoretically find the optimal choice of $\mathcal{T}$.
\end{rem}
In other words, when $m$ has large exponents relative to its prime factors, our algorithm makes large improvements (as $\log \rad m\ll \log m$ in the case where the exponents are on the order of $\log P$). In particular, our algorithm does well for prime powers. We have the following cororollary:
\begin{coror}
    Let $m=p^\ell$ where $\ell=O(\log p)$. Let $a,n$ be positive integers such that $\gcd(a,m)=1$. We may then compute $a^n\pmod{m}$ in $O(\log p)$ steps.
\end{coror}
\begin{proof}
    This trivially follows from Corollary \ref{sqrt} with $k=1$.  
\end{proof}
This is quite impressive as, if we pick $\ell$ on the order of $\log p$, we may modular exponentiate modulo $p^{\ell}$ in the same number of steps (asymptotically) as modulo $p$. Furthermore, because each operation (modular multiplication or addition) can be taken to be $O((\log m)^2)$, we have the following result:
\begin{cor}
    Let the modulus $m$ be a positive integer with known factorization $p_1^{e_1}p_2^{e_2}\cdots p_k^{e_k}$ such that all primes $p_i$ have the same bit length as an integer $P$, and all $e_i$ are such that $e_i\sim k\log P$. If the standard modular exponentiation algorithm takes $T$ time, our algorithm for the same values takes $c T^{5/6}$ time, for some constant $c$ and given unit of time.
\end{cor}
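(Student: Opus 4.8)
The plan is to convert the step-count bound of Corollary \ref{sqrt} into a bit-complexity bound and compare it with the bit complexity of repeated squaring. Write $L = \log m$. Under the stated hypotheses on the $p_i$ and $e_i$, Corollary \ref{sqrt} says our algorithm performs $O(\sqrt{L})$ modular operations (multiplications and additions modulo $m$). Each such operation acts on integers of bit length $O(L)$, so with schoolbook arithmetic it costs $O(L^2)$ bit operations, as recorded in the paragraph preceding the statement; hence the total running time of our algorithm is $O(L^{5/2})$ in the chosen unit of time.

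Next I would pin down $T$. The standard repeated squaring algorithm, after reducing the exponent modulo $\varphi(m)$ via Euler's theorem (Theorem 2.3), performs $\Theta(\log m) = \Theta(L)$ modular multiplications, each again costing $\Theta(L^2)$ bit operations; therefore $T = \Theta(L^3)$, and in particular $L \asymp T^{1/3}$. Substituting into the previous paragraph gives a running time of $O\!\left((T^{1/3})^{5/2}\right) = O(T^{5/6})$ for our algorithm, i.e. it is at most $c\,T^{5/6}$ for a suitable absolute constant $c$, which is the assertion of the corollary. The exponent $5/6$ is simply $\tfrac{5/2}{3}$, the ratio of the two exponents of $L$.

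The one point requiring care is the lower bound $T \gg L^3$: the upper bound $T \ll L^3$ is immediate, but the inequality ``our time $\le c T^{5/6}$'' is only meaningful once $T$ is genuinely of order $L^3$, which needs the reduced exponent $n \bmod \varphi(m)$ to have bit length $\Theta(L)$. This holds for generic $n$ and in the worst case, and it is precisely the convention adopted in the excerpt, where the cost of repeated squaring modulo $m$ is taken to be $\Theta(\log m)$ multiplications. Under that convention the estimate is exactly as above; if one instead wanted a bound uniform in $n$ one would compare worst-case costs of the two algorithms and obtain the same exponent. I expect no substantive obstacle here beyond making this bookkeeping explicit and fixing the arithmetic model (bit operations with schoolbook multiplication) that defines the ``given unit of time''.
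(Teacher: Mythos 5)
Your proposal is correct and follows essentially the same route as the paper: convert the $O(\sqrt{\log m})$ step count of Corollary \ref{sqrt} and the $O(\log m)$ step count of repeated squaring into running times by multiplying by the $O((\log m)^2)$ cost per modular operation, then eliminate $\log m$ to get the exponent $\tfrac{5/2}{3}=5/6$. Your added remark on needing $T\gg(\log m)^3$ (not just the upper bound) for the comparison to be meaningful is a fair point of care that the paper glosses over by writing $\sim$ rather than $O$.
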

\begin{proof}
For convenience, denote $O((\log m)^e)$ simply as $f(e)$. The standard algorithm takes $f(1)$ steps and hence $f(3)$ time. Our algorithm takes $f(1/2)$ steps and hence $f(5/2)$ time. Then, if $T_1$ is the time it takes our algorithm to run and $T_2$ is the time for the standard algorithm to run, we have for some constants $c_1, c_2$ that
\[
(\log m)^{5/2} \sim c_1T_1,
\]
\[
(\log m)^{3} \sim c_2T_2.
\]
Hence $T_1 \sim c_2^{5/6} c_1^{-1}T_2^{5/6}$, so we obtain the desired result by taking the appropriate $c \sim c_2^{5/6} c_1^{-1}$.
\end{proof}
\begin{rem}
    One may think that for such smooth $m$, a CRT-type approach may also be fast. However, it is well-known (see e.g. \cite{inproceedings}) that CRT is $O(\ell^2)$, where the moduli have bit length $\ell$. Hence this is a non-issue.
\end{rem}
\section{General Modular Exponentiation}
We may extend the ideas of our method to prove a more general theorem about matrix modular exponentiation, which translates over to linear recurrences. In this section, a step is a matrix multiplication.
\begin{thm}\label{matmodexp}
    Let $m=p_1^{e_1}p_2^{e_2}\cdots p_{k}^{e_k}$ be the canonical factorization of some $m\in \mathbb{N}$. Choose parameters $t_i\in [1, e_i]$ for each $1\le i\le k$. Let $d\in \mathbb{N}$. Let $A\in GL_d(\mathbb{Z}/m\mathbb{Z})$ and $n\in \mathbb{N}$. Then we may compute $A^n$ in $O\left(\max(e_i/t_i)+\sum_{j=1}^k (t_j-1)d^2\log p_j + \sum_{j=1}^k \sum_{i=0}^{d-1} \log(p_j^d-p_j^i)\right)$ steps.
\end{thm}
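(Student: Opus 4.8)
The plan is to transcribe the proof of Theorem \ref{main} almost verbatim, replacing Euler's theorem by Lagrange's theorem (Theorem \ref{lagrange}) applied to the finite group $GL_d(\mathbb{Z}/T\mathbb{Z})$ and using Lemma \ref{glt} to identify its order. Fix $T=\prod_{j=1}^k p_j^{t_j}$ and set $\Phi=|GL_d(\mathbb{Z}/T\mathbb{Z})|$, which by Lemma \ref{glt} equals $\prod_{j=1}^k p_j^{(t_j-1)d^2}\prod_{i=0}^{d-1}(p_j^d-p_j^i)$ and, by the remark following that lemma, is computable in a negligible $O(kd)$ steps. Since $t_j\le e_j$ we have $T\mid m$, so reducing modulo $T$ carries $A\in GL_d(\mathbb{Z}/m\mathbb{Z})$ to an element of $GL_d(\mathbb{Z}/T\mathbb{Z})$, and Theorem \ref{lagrange} gives $A^\Phi\equiv I\pmod T$. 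Writing $n=M\Phi+r$ by the division algorithm, $A^n\equiv A^r(A^\Phi)^M\pmod m$.

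For the factor $(A^\Phi)^M$, first compute $A^\Phi\pmod m$ by repeated squaring of matrices, costing $O(\log\Phi)$ matrix multiplications; note that $\log\Phi=\sum_{j=1}^k(t_j-1)d^2\log p_j+\sum_{j=1}^k\sum_{i=0}^{d-1}\log(p_j^d-p_j^i)$ is precisely the sum of the last two terms in the claimed bound. Put $C=(A^\Phi\bmod m)-I$; since $A^\Phi\equiv I\pmod T$ and $T\mid m$, we have $C\equiv TB\pmod m$, where $B=(A^\Phi-I)/T$ is an integer matrix. Because $I$ commutes with $TB$, the binomial theorem applies and $(A^\Phi)^M=(I+TB)^M=\sum_{i=0}^M\binom{M}{i}(TB)^i$. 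Exactly as in the proof of Theorem \ref{main}, $T^\ell\equiv 0\pmod m$ for $\ell=1+\max(\lfloor e_i/t_i\rfloor)$, so the terms with $i\ge\ell$ drop out modulo $m$; and $(TB)^i\equiv C^i\pmod m$ because, expanding $C^i=(TB+mD)^i$ noncommutatively with $D$ an integer matrix, every word other than $(TB)^i$ carries a factor of $m$. Hence $(A^\Phi)^M\equiv\sum_{i=0}^{\ell-1}\binom{M}{i}C^i\pmod m$.

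It remains to evaluate this truncated sum and assemble the answer. The scalars $\binom{M}{i}\bmod m$ for $0\le i\le\ell-1$ are produced exactly as in Theorem \ref{main}: precompute the inverse pairs of $0,\dots,\ell-1$ modulo $m$ in $O(\ell)$ scalar operations via Lemma \ref{recursioninversepairs}, then iterate $\binom{M}{i+1}=\frac{M-i}{i+1}\binom{M}{i}$. Sweeping $i=0,\dots,\ell-1$, each step uses one matrix multiplication (to advance the running power $C^i$), one scalar-times-matrix product ($d^2$ scalar multiplications, hence no more costly than a matrix multiplication), and $O(1)$ scalar arithmetic; this totals $O(\ell)$ matrix multiplications, the $O(\ell)$ scalar work being lower order. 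Computing $A^r\pmod m$ by repeated squaring costs $O(\log r)=O(\log\Phi)$ matrix multiplications since $r<\Phi$, and one final matrix multiplication yields $A^n\equiv A^r\sum_{i=0}^{\ell-1}\binom{M}{i}C^i\pmod m$. The step count is therefore $O(\ell+\log\Phi)=O\big(\max(e_i/t_i)+\sum_{j=1}^k(t_j-1)d^2\log p_j+\sum_{j=1}^k\sum_{i=0}^{d-1}\log(p_j^d-p_j^i)\big)$, as claimed.

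I do not anticipate a real obstacle: this is a routine adaptation of Theorem \ref{main}. The two points that deserve a careful word are that the truncation index $\ell$ is still dictated solely by the $e_i$ and $t_i$ (indeed $T^\ell\equiv 0\pmod m$ is a statement about the scalar $T$, independent of $d$), and the bookkeeping that scalar operations and scalar-times-matrix products never dominate the matrix-multiplication count (they contribute only lower-order terms for every $d\ge 1$).
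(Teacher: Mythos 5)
Your proposal is correct and follows essentially the same route as the paper, which simply declares the algorithm ``identical to that of Theorem \ref{main}'' with the exponent reduced modulo $|GL_d(\mathbb{Z}/T\mathbb{Z})|$ via Lagrange's theorem and Lemma \ref{glt}. In fact you supply more detail than the paper does --- notably the justification that the binomial expansion of $(I+TB)^M$ is valid because $I$ commutes with $TB$, and that $(TB)^i\equiv C^i\pmod{m}$ --- which are points the paper leaves implicit.
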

\begin{proof}
    Let $T=\prod_{j=1}^{k} p_j^{t_j}$. The algorithm is identical to that of Theorem \ref{main}, except we decompose the exponent modulo $|GL_d(\mathbb{Z}/T\mathbb{Z})|$ instead of $\varphi(T)$. This works because of Lagrange's theorem \ref{lagrange}. The complexity is hence $O\left(\max(e_i/t_i)+\log |GL_d(\mathbb{Z}/T\mathbb{Z})|\right)$. By Lemma \ref{glt}, this is the desired complexity.
\end{proof}
\begin{rem}
    A near-identical result holds for the special linear group instead. This is less practically useful but still important to note.
\end{rem}
This asymptotic may be slightly difficult to work with, but a trivial corollary is the following:
\begin{cor}\label{nicematmodexp}
    Let $m=p_1^{e_1}p_2^{e_2}\cdots p_{k}^{e_k}$ be the canonical factorization of some $m\in \mathbb{N}$. Choose parameters $t_i\in [1, e_i]$ for each $1\le i\le k$. Let $d\in \mathbb{N}$. Let $A\in GL_d(\mathbb{Z}/m\mathbb{Z})$ and $n\in \mathbb{N}$. Then we may compute $A^n$ in $O\left(\max(e_i/t_i)+d^2\sum_{i=1}^k t_i\log p_i\right)$ steps.
\end{cor}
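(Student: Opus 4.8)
The plan is to derive the stated bound directly from the complexity estimate in Theorem~\ref{matmodexp} by crudely bounding each of the two prime-dependent sums appearing there. First I would handle the middle term: since $t_j-1\le t_j$ for every $j$, we immediately have
$$\sum_{j=1}^k (t_j-1)d^2\log p_j\le d^2\sum_{j=1}^k t_j\log p_j.$$

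Next I would bound the third term $\sum_{j=1}^k\sum_{i=0}^{d-1}\log(p_j^d-p_j^i)$. For each $j$ and each $0\le i\le d-1$ we have $0<p_j^d-p_j^i<p_j^d$, hence $\log(p_j^d-p_j^i)<d\log p_j$; summing over the $d$ values of $i$ gives $\sum_{i=0}^{d-1}\log(p_j^d-p_j^i)<d^2\log p_j$. Since $t_j\ge 1$ by hypothesis, $d^2\log p_j\le d^2 t_j\log p_j$, and summing over $j$ yields
$$\sum_{j=1}^k\sum_{i=0}^{d-1}\log(p_j^d-p_j^i)<d^2\sum_{j=1}^k t_j\log p_j.$$

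Combining the two displays, the sum of the second and third terms in the bound of Theorem~\ref{matmodexp} is at most $2d^2\sum_{j=1}^k t_j\log p_j=O\bigl(d^2\sum_{i=1}^k t_i\log p_i\bigr)$, and adding back the unchanged $\max(e_i/t_i)$ term gives exactly the asserted complexity. There is no real obstacle here—the statement is a genuine corollary, the only points requiring a moment's care being that the subtractions $p_j^d-p_j^i$ are harmless since they only make each logarithm smaller, and that the hypothesis $t_i\ge 1$ is precisely what lets us absorb the $\log|GL_d(\mathbb{F}_{p_j})|$ contribution into the $\sum t_i\log p_i$ term rather than needing a separate $\log\rad m$ summand.
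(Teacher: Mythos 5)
Your proof is correct and follows essentially the same route as the paper's: the paper likewise derives the corollary from Theorem~\ref{matmodexp} via the estimate $\log(p_j^d-p_j^i)<d\log p_j$, which bounds the double sum by $d^2\sum_{j}\log p_j$, and then absorbs everything into $d^2\sum_i t_i\log p_i$. Your write-up is just a more explicit version of the same argument, spelling out the $t_j-1\le t_j$ and $t_j\ge 1$ steps that the paper leaves implicit.
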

\begin{proof}
    This follows from Theorem \ref{matmodexp} and the obvious estimate $\log(p_j^d-p_j^i)<\log(p_j^d)=d\log p_j$. Indeed, the double sum is upper-bounded by $d^2\sum_{j=1}^k \log p_j$ and so the result follows.
\end{proof}
\begin{rem}
    With $d=1$, we get Theorem \ref{main} for prime powers. Also, this shows we can modular exponentiate matrices of size $O(1)$ in the same complexity as natural numbers.
\end{rem}
Another corollary is that we may quickly compute the residue of large elements of a linear recurrent sequence modulo prime powers:
\begin{cor}
    Let $m=p_1^{e_1}p_2^{e_2}\cdots p_k^{e_k}$ be the canonical factorization of some $m\in \mathbb{N}$. Choose parameters $t_i\in [1, e_i]$ for each $1\le i\le k$. Let $(u_n)^{\infty}_{n=0}$ be a sequence of elements of $\mathbb{Z}/m\mathbb{Z}$ related with a linear recurrence relation of degree $d$:
    $$u_{n+d}=c_{d-1}u_{n+d-1}+\cdots+c_0u_n.$$
    Suppose that $\gcd(c_0, m)=1$. Given sufficient initial terms, we may compute any element $u_N$ in $O\left(\max(e_i/t_i)+\sum_{j=1}^k (t_j-1)d^2\log p_j + \sum_{j=1}^k \sum_{i=0}^{d-1} \log(p_j^d-p_j^i)\right)$ steps.
\end{cor}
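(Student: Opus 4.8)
The plan is to translate the recurrence into a single matrix power via its companion matrix and then quote Theorem~\ref{matmodexp}. First I would form the $d\times d$ companion matrix
$$A=\begin{pmatrix} 0 & 1 & 0 & \cdots & 0 \\ 0 & 0 & 1 & \cdots & 0 \\ \vdots & \vdots & \vdots & \ddots & \vdots \\ 0 & 0 & 0 & \cdots & 1 \\ c_0 & c_1 & c_2 & \cdots & c_{d-1} \end{pmatrix}$$
over $\mathbb{Z}/m\mathbb{Z}$, and set $v_n=(u_n,u_{n+1},\ldots,u_{n+d-1})^{\mathsf{T}}$. Checking each row individually shows that the recurrence relation is precisely the identity $Av_n=v_{n+1}$ for all $n\ge 0$ (the first $d-1$ rows just shift coordinates, and the last row is the recurrence itself). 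Hence $v_N=A^N v_0$, so $u_N$ is the first coordinate of $A^N v_0$, and the hypothesis ``given sufficient initial terms'' is exactly the requirement that $u_0,\ldots,u_{d-1}$ be available.

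Next I would verify that $A$ is invertible modulo $m$, which is the condition needed to apply Theorem~\ref{matmodexp}. Expanding the determinant along the first column leaves only the entry $c_0$, whose minor is the $(d-1)\times(d-1)$ identity matrix, so $\det A=(-1)^{d-1}c_0$. This is a unit in $\mathbb{Z}/m\mathbb{Z}$ precisely because $\gcd(c_0,m)=1$, and therefore $A\in GL_d(\mathbb{Z}/m\mathbb{Z})$. This invertibility check is the only place the hypothesis on $c_0$ is used, and it is essentially the only non-formal point in the argument.

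Finally I would invoke Theorem~\ref{matmodexp}, with the same $m$, the same parameters $t_i$, and the same $d$, to compute $A^N$ in
$$O\!\left(\max(e_i/t_i)+\sum_{j=1}^k (t_j-1)d^2\log p_j + \sum_{j=1}^k \sum_{i=0}^{d-1} \log(p_j^d-p_j^i)\right)$$
steps; note that the bound there does not involve the exponent $N$, since $N$ is first reduced modulo $|GL_d(\mathbb{Z}/T\mathbb{Z})|$ using Lagrange's theorem. Recovering $u_N$ then costs one matrix--vector multiplication to form $A^N v_0$ followed by reading off a single entry, which is absorbed into the $O(\bullet)$. I do not anticipate any genuine obstacle beyond the determinant computation above; the rest is the standard companion-matrix reduction of a linear recurrence to a matrix exponentiation, already carried out in the proof of Theorem~\ref{matmodexp}.
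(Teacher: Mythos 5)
Your proposal is correct and follows exactly the same route as the paper's proof: reduce the recurrence to a power of the companion matrix, observe that its determinant is $\pm c_0$ and hence a unit since $\gcd(c_0,m)=1$, and apply Theorem~\ref{matmodexp}. You simply spell out the companion matrix and the determinant expansion more explicitly than the paper does.
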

\begin{proof}
    Linear recurrence relations of order $d$ can be represented as matrix powers. In particular, the respective matrices are in $GL_{d}(\mathbb{Z}/m\mathbb{Z})$, so the result follows immediately by Theorem \ref{matmodexp}. One must make sure that the matrix produced has determinant invertible in $\mathbb{Z}/m\mathbb{Z}$ (and hence in $\mathbb{Z}/T\mathbb{Z}$). This determinant has magnitude $c_0$, so we require $\gcd(c_0, m)=1$. This condition is met, so we are done.
\end{proof}
\begin{rem}
    This shows that we may compute the $n$th Fibonacci number modulo $m$ in the same number of steps as we perform a modular exponentiation modulo $m$. For example, modulo $p^k$ we may do it in $O(k+\log p)$ steps.
\end{rem}
Fiduccia \cite{doi:10.1137/0214007} and, more recently, Bostan and Mori \cite{bostan2020simple} provide the state-of-the-art results for this problem in the case of a sequences over a general ring. The amount of steps taken is on the order of $M(d)\log n$, where $M(d)=O(d\log d\log\log d)$ is the number of operations to multiply two polynomials in the ring. We achieve a stronger bound for the case where the ring is $\mathbb{Z}/m\mathbb{Z}$. Our bound does not depend on $n$ because of the reduction we make modulo $|GL_d(\mathbb{Z}/T\mathbb{Z})|$. In order to reduce $n$ in the same manner modulo $m$ for the bounds given by Fiduccia and by Bostan and Mori, the best possible reduction is by $|GL_d(\mathbb{Z}/m\mathbb{Z})|$. By Lemma \ref{glt}, $M(d)\log |GL_d(\mathbb{Z}/m\mathbb{Z})|\approx M(d)d^2\log m\gg O(d^3\log d\log m)$. If $\omega\in [2,3]$ is the exponent for matrix multiplication, we take under $O(d^{\omega}\max(e_i/t_i)+d^{2+\omega}\log T)$ steps of the same complexity as the steps taken by Fiduccia. Therefore Fiduccia's algorithm is better as a function of $d$, but we are better as a function of $m$. It is often the case that $m\gg d$, so we make significant improvement. Indeed, the key optimization that Fiduccia makes (involving the characteristic polynomial) only affects the complexity as a function of $d$.
\newline \newline
We can also apply our algorithm to ring extensions:
\begin{thm}
    Let $p$ be a prime and $k$ and $n$ be natural numbers. Consider a finite ring extension $R=\mathbb{Z}/p^k\mathbb{Z}[\alpha_1, \alpha_2, \cdots, \alpha_n]$. Consider the corresponding field extension $F=\mathbb{F}_p[\alpha_1, \alpha_2, \cdots, \alpha_k]$. We can exponentiate in $R$ in $O(k+\log |F|)$ steps, where each step is an operation in $R$.
\end{thm}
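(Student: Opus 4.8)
The plan is to transcribe the algorithm of Theorem \ref{main} into the ring $R$, with the prime power $p^k$ playing the role of the modulus $m$ and the order of the residue field's unit group playing the role of $\varphi(T)$. I would first dispose of the trivial case: if $\beta\in R$ is not a unit then $\beta\in pR$, and since $p^k=0$ in $\mathbb{Z}/p^k\mathbb{Z}\subseteq R$ this forces $\beta^k=0$, so $\beta^N=0$ for all $N\ge k$; hence assume $\beta\in R^\times$.

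The first substantive step is to reduce modulo $pR$. From the presentation of $R$ one has $R/pR\cong F$, so the image $\bar\beta$ of $\beta$ lies in $F^\times$, a group of order $|F|-1$. Lagrange's theorem (Theorem \ref{lagrange}) then gives $\bar\beta^{|F|-1}=1$ in $F$, i.e. $\beta^{\Phi}=1+p\gamma$ for some $\gamma\in R$, where $\Phi:=|F|-1$. I would next decompose $N=M\Phi+r$ with $0\le r<\Phi$, so that $\beta^N=\beta^r(1+p\gamma)^M$, and compute $\beta^r$ and $\beta^{\Phi}$ by repeated squaring, each costing $O(\log\Phi)=O(\log|F|)$ operations in $R$.

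The remaining step is to evaluate $(1+p\gamma)^M$. Expanding by the binomial theorem, $(1+p\gamma)^M=\sum_{i\ge 0}\binom{M}{i}(p\gamma)^i$, and because $(p\gamma)^i=p^i\gamma^i=0$ in $R$ for $i\ge k$, only the $k$ terms $i=0,\dots,k-1$ survive. Exactly as in the proof of Theorem \ref{main}, the integers $\binom{M}{i}$ (reduced modulo $p^k$ and viewed in $R$ via $\mathbb{Z}/p^k\mathbb{Z}\hookrightarrow R$) are built recursively from $\binom{M}{i+1}=\frac{M-i}{i+1}\binom{M}{i}$; the inversions of $1,\dots,k-1$ modulo $p^k$ that this needs — which may fail to exist literally when $p$ is small — are supplied by the inverse-pair recursion of Lemma \ref{recursioninversepairs} in $O(k)$ steps, and forming the powers $(p\gamma)^i$ for $i<k$ together with the final sum is a further $O(k)$ operations in $R$. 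Totalling, the algorithm uses $O(\log|F|)+O(k)=O(k+\log|F|)$ steps.

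I do not expect a serious obstacle, since the argument is essentially a line-by-line specialization of Theorem \ref{main}; the points needing the most care are organizational: verifying $R/pR\cong F$ from the presentation (and noting this makes $R$ local with maximal ideal $pR$, which is what underlies the non-unit reduction), confirming that Lemma \ref{recursioninversepairs} applies verbatim with the prime-power modulus $p^k$, and checking that every arithmetic operation used — including those on integers modulo $p^k$ — legitimately counts as a single operation in $R$.
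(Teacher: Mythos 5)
Your proof is correct and follows essentially the same route as the paper, which simply specializes Theorem \ref{main} to prime powers with $t=1$ and decomposes the exponent via Lagrange's theorem applied to the residue field (the paper loosely writes ``modulo $|F|$'' where you more precisely use $|F^\times|=|F|-1$, an immaterial difference for the asymptotics). Your explicit handling of the non-unit case, the isomorphism $R/pR\cong F$, and the vanishing of the binomial terms for $i\ge k$ fills in details the paper's two-line proof leaves implicit.
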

\begin{proof}
    The algorithm is exactly that of Theorem \ref{main} for prime powers and $t=1$, except that we decompose the exponent modulo $|F|$. This works due to Lagrange's Theorem \ref{lagrange}.
\end{proof}
A nice example of this theorem is that we provide fast modular exponentiation for Gaussian Integers modulo prime powers! This case is also related to Theorem \ref{matmodexp} due to the bijection $a+bi\leftrightarrow\begin{pmatrix}a & -b \\ b & a\end{pmatrix}$.
\section{Programmatical Analysis}
In this section, we test only integer modular exponentiation of our algorithm, as we prove that the generalizations have similar complexities.
\newline \newline
Let us begin by testing our algorithm for $m=p^k$. We will test against Python's built-in $\mathrm{pow}$ function. We will iterate over primes $p$, and choose $k$ randomly in a small interval around $\log p$. In particular, we choose $k$ uniformly at random in the interval $[\log p-\sqrt{\log p}, \log p+\sqrt{\log p}]$. We choose $a\in [p^k/2, p^k]$ randomly (such that $\gcd(a,p)=1$) as well. We choose $n$ randomly in the interval $[p^k/2, p^k]$. We choose $t_i=1$ for simplicity. We then compare the runtime for computation of $a^n\pmod{p^k}$ via our method ($\mathcal{R}_1$) and Python's built-in $\mathrm{pow}$ function ($\mathcal{R}_2$) over $1000$ iterations.
\newline \newline
In Figure 1 we iterate over $0\le n\le 3.5\cdot 10^4$, plotting $p_{n+0.7\cdot 10^5}$ versus the respective ratio $\frac{\mathcal{R}_2}{\mathcal{R}_1}$.
\begin{figure}[htbp]
\centering
\includegraphics[scale=0.4]{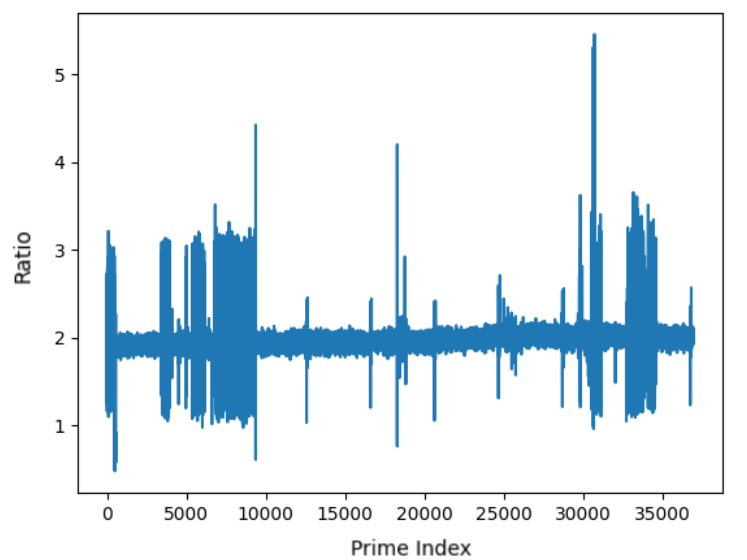}
\caption{Ratio for small primes}
\label{fig:smallprimes}
\end{figure}
In Figure 2 we iterate over $0\le n\le 3.5\cdot 10^4$, plotting $p_{n+3.5\cdot 10^5}$ versus the respective ratio $\frac{\mathcal{R}_2}{\mathcal{R}_1}$.
\begin{figure}[htbp]
\centering
\includegraphics[scale=0.4]{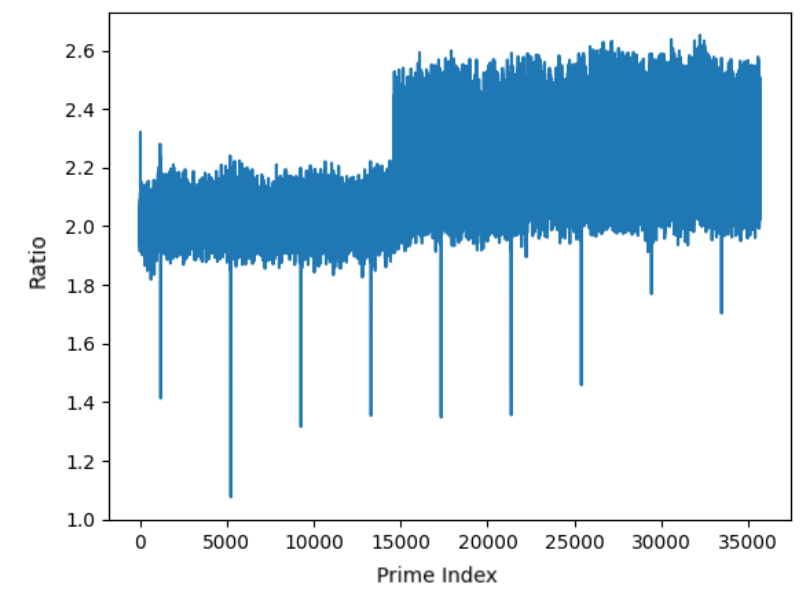}
\caption{Ratio for big primes}
\label{fig:smallprimes}
\end{figure}
As seen in the figures, there is a high variance in the ratio over small primes, whereas it steadies out for larger primes. The large jump in the second figure at about $n=16000$ is likely because Python has different optimizations for smaller calls of the $\mathrm{pow}$ function. We still do not have an explanation for the random jumps in data. Nonetheless, a basic implementation of our algorithm makes significant improvements to the highly optimized $\mathrm{pow}$ function. 

The below figure shows how the problem of optimizing $\mathcal{T}$ is important.
\begin{figure} [H]
    \centering
    \includegraphics[scale=0.4]{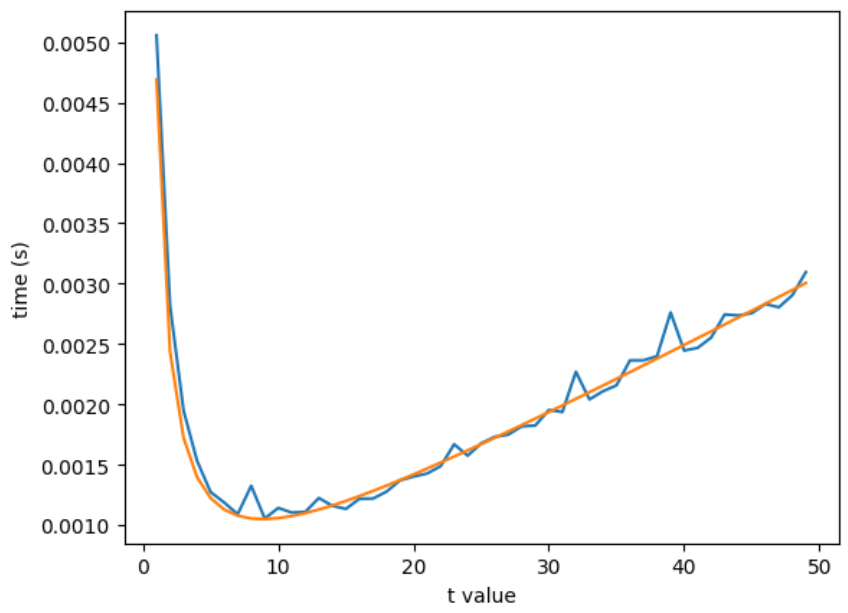}
    
    \caption{Choosing optimal $t$}
    \label{fig:optimalt}
\end{figure}
For a specific choice of computing $a^n\pmod{p^e}$ with $p=101, e=200, a=13,$ and $n=\lfloor \frac{p^e}{3}\rfloor$ (chosen arbitrarily), we vary the choice of $\mathcal{T}$ over the set $\{\{1\}, \{2\}, \cdots, \{50\}\}$. We plot this against the runtime for computation to create the blue curve. As seen in the figure, the optimal value of $\mathcal{T}$ yields a time approximately $5$ times faster than $\mathcal{T}=\{1\}$. The orange curve demonstrates how the graph follows a curve of the form $at + b/t$, as indicated by Corollary \ref{sqrt}. The $R^2$ value is $96.5\%$.
\newline \newline
Recall that we achieve a complexity of $O(\sqrt{\log m})$ for an infinite family of $m$, by Corollary \ref{sqrt}. We aim to show this empirically. Because the complexity of the repeated squaring algorithm is $O(\log m)$, if we graph the ratio $r = \mathcal{R}_2/\mathcal{R}_1$ of the runtime of python's built-in modular exponentiation function to ours, we expect to see $r \propto \sqrt{\log m}$. With $y = r$ and $x = \log m$, we anticipate a graph of the form $y =c \sqrt{x}$.
\newline \newline
The setup is as follows. Let $P(n)$ be the first prime greater than $10^n$. For $10 \le n < 50,$ we let $p = P(n)$ and $e = n$. Pick $t_i=1$ for simplicity. Then, $m = p^e$ is in the desired family. For each $n$, we compute the ratio $r$ and plot it against $\log_{10} m \approx n^2$.
\begin{figure}[H]
\centering
\includegraphics[scale=0.4]{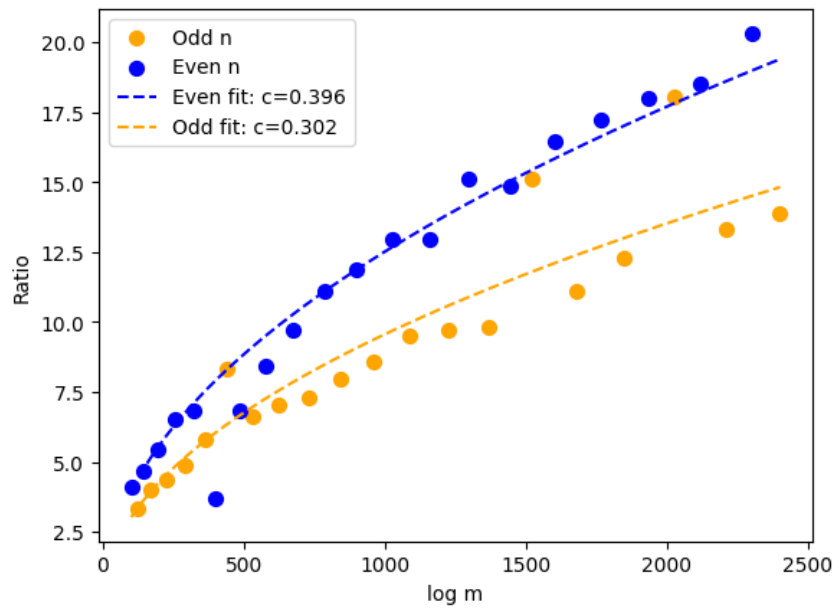}
\caption{Ratio graph with $\log m$ on the $x$-axis}
\label{fig:sqrtx}
\end{figure}
 The desired $y=c\sqrt{x}$ curve appears. Interestingly, the ratio for even $n$ is on average $24.8$ percent higher than for odd $n$. The curves fit quite well: the $R^2$ values for even and odd $n$ are $94.9$ and $85.2$ percent respectively.
\newline \newline
One might notice three values of odd $n$ appear to fit on the curve for even $n$. Those values are $21$, $39$, and $45$. We do not know why this phenomenon occurs or why the ratio is higher in general for even $n$.

\begin{rem}
In Python, we have achieved computation time more than $200$ times faster than the built-in $\mathrm{pow}$ for specific large values of $m$.
\end{rem}
See \cite{isaacs24} for the Python code used to create these graphs.

\section{Conclusion}
We presented a fast algorithm for modular exponentiation when the modulus is known. We also presented a variant of this algorithm which uses less memory. We analyzed this algorithm in the general case, then shifted our focus to the specific case where the modulus has large prime exponents. We showed particular interest in the case where the modulus is a prime power, and we analyzed this case programmatically, testing it against Python's built-in $\mathrm{pow}$ function. We also presented a stronger version of our algorithm for matrix modular exponentiation, which applies to the computation of large terms in linear recurrent sequences modulo some $m$.
\newline \newline
This algorithm has potential practical use in cryptography. Fast modular exponentiation is vital in the fast encryption of classic algorithms such as RSA and the Diffie-Hellman Key Exchange. It is even used in quantum algorithms: modular exponentiation is the bottleneck of Shor's algorithm. If one could construct a cryptosystem in which it is useful to have a known modulus with large prime exponents, our algorithm would be applicable to its encryption process. For example, a variant of Takagi's cryptosystem \cite{takagi1998fast} with larger exponents has such properties. Additionally, work has been done on using matrix exponentiation and linear recurrences for error-correcting codes. For example, Matsui's 2012 paper \cite{matsui2012decoding} uses linear sequences for a decoding algorithm. It is quite possible that our algorithm is potentially useful for such an algorithm.
\newline \newline
There are a couple of things that we wish to do with this work going forward. We'd like to find a framework for programmatically testing general moduli (not only prime powers). Additionally, we hope to to make further progress on the front of optimizing $\mathcal{T}$ in practice. Furthermore, we want to come up with explanations for some of the phenomena that we see in the figures in section 5. Finally, we aim to implement further optimizations to our algorithm such as Montgomery Reduction.
\section{Acknowledgments}
We thank Dr.~Simon Rubinstein-Salzedo for the useful discussion we had during the creation of this paper. We also thank Eva Goedhart and Nandita Sahajpal for awarding us the Lehmer Prize for this work at the 2023 West Coast Number Theory conference. We finally thank Dr.~John Gorman from Jesuit High School Portland for inspiring this paper.

\bibliographystyle{unsrt}
\bibliography{Ref}
\end{document}